\documentclass[reqno]{amsart}
\usepackage[colorlinks=true, pdfstartview=FitV, linkcolor=blue, citecolor=blue]{hyperref}
\usepackage{amssymb,amsmath,amsfonts}
\usepackage{bm}
\usepackage{enumitem}
\usepackage{array}
\usepackage{tikz}
\usetikzlibrary{arrows.meta}
\allowdisplaybreaks

\newcommand{\R}{\mathbb{R}}
\newcommand{\C}{\mathbb{C}}
\newcommand{\Z}{\mathbb{Z}}
\newcommand{\Q}{\mathbb{Q}}
\newcommand{\rar}{\rightarrow}
\newcommand{\mc}{\mathcal}
\newcommand{\mr}{\mathrm}
\newcommand{\Ok}{\mathcal{O}_K}
\newcommand{\Zl}{Z_\Lambda}
\newcommand{\balph}{\bm{\alpha}}
\newcommand{\bp}{\bm{p}}
\newcommand{\bw}{\bm{w}}
\newcommand{\bth}{\bm{\theta}}
\newcommand{\bze}{\bm{0}}
\newcommand{\bc}{\bm{c}}

\DeclareMathOperator{\Arg}{Arg}
\DeclareMathOperator{\Tr}{Tr}
\DeclareMathOperator{\Nm}{N}
\newcommand{\iu}{\mathrm{i}}

\newtheorem*{theorem*}{Theorem}
\newtheorem{theorem}{Theorem}
\newtheorem{lemma}[theorem]{Lemma}
\newtheorem{corollary}[theorem]{Corollary}

\begin{document}

\title[A geometric proof of Peck's theorem]{A geometric proof of Peck's theorem}

\author{Kavita Dhanda, Josh Flynn, Alan Haynes}

\subjclass[2020]{11J13, 11J68, 11H06}

\keywords{Simultaneous Diophantine approximation, algebraic number fields, geometry of numbers}

\begin{abstract}
Suppose that $d\ge 2$ and that $1,\alpha_1,\ldots ,\alpha_d$ is a basis for a real algebraic number field. A theorem of Peck from 1961 establishes that
\[ \liminf_{n\rar\infty}n\log n\,\|n\alpha_1\|\cdots\|n\alpha_d\|\ < \infty.\]
The goal of this paper is to recast Peck's proof in an intuitive geometric framework, where the result follows from a single application of the Minkowski convex body theorem. We will also explain how a simple modification of this approach leads immediately to new proofs of results of de Mathan, Teuli\'{e}, and Bugeaud regarding the $p$-adic Littlewood conjecture for $d$-tuples of algebraic numbers. Finally, changing the shape of the convex body allows us to make progress on a conjecture raised by Peck in the same paper, about distributing the logarithmic savings unequally among the coordinates. We prove the conjecture for every real biquadratic field with its natural basis, and for an arbitrary number field when the factors involved are of comparable size.
\end{abstract}

\maketitle

\section{Introduction}\label{sec.Intro}
J.~E.~Littlewood conjectured in around 1930 that, for all $\balph=(\alpha_1,\alpha_2)\in\R^2$,
\begin{equation}\label{eqn.LC}
\liminf_{n\rar\infty}n\|n\alpha_1\|\|n\alpha_2\|=0, 
\end{equation}
where $\|x\|$ denotes the distance from $x$ to the nearest integer. In 1955, Cassels and Swinnerton-Dyer \cite{CassSwin1955} proved that \eqref{eqn.LC} holds when $1,\alpha_1,$ and $\alpha_2$ form a basis for a cubic number field. In 1961, Peck \cite{Peck1961}, using a different method, proved the following result.
\begin{theorem}\label{thm.Main}
	Let $d\ge 2$ and suppose that $1,\alpha_1,\ldots,\alpha_d$ is a basis for a real algebraic number
	field. Then there exists a constant $C=C(\alpha_1,\ldots,\alpha_d)$ such that, for all $T\ge 2$, there is a positive integer $n\le T$ with
	\begin{equation}\label{eqn.Peck}
		\|n\alpha_1\|\le Cn^{-1/d},\qquad
		\|n\alpha_i\|\le Cn^{-1/d}(\log T)^{-1/(d-1)}
		\quad (2\le i\le d).
	\end{equation}
\end{theorem}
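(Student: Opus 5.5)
The plan is to encode the approximations $\|n\alpha_i\|$ as values of linear forms on a lattice attached to $K=\Q(\alpha_1,\ldots,\alpha_d)$, and then apply Minkowski's convex body theorem once to a suitably shaped convex body. Let $\sigma_0=\mathrm{id},\sigma_1,\ldots,\sigma_d$ be the embeddings of $K$; for simplicity I first treat $K$ totally real, and the complex case needs only notational changes. Let $M=\Z+\Z\alpha_1+\cdots+\Z\alpha_d$ and let $M^*$ be its dual with respect to the trace form. For $\beta\in M^*$ put $n=\Tr(\beta)$ and $m_i=\Tr(\beta\alpha_i)$, which are integers. Then
\[ n\alpha_i-m_i=\sum_{j=1}^d \sigma_j(\beta)\bigl(\alpha_i-\sigma_j(\alpha_i)\bigr),\qquad n=\beta+\sum_{j\ge1}\sigma_j(\beta). \]
So under the Minkowski embedding $\beta\mapsto x=(\sigma_0\beta,\ldots,\sigma_d\beta)$, which carries $M^*$ to a full lattice $\Lambda\subset\R^{d+1}$, we have $n\approx x_0$, and $n\alpha_i-m_i=L_i(x_1,\ldots,x_d)$ for $d$ linearly independent linear forms $L_1,\ldots,L_d$ in the last $d$ coordinates. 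The matrix $(\alpha_i-\sigma_j\alpha_i)$ is invertible because $1,\alpha_1,\ldots,\alpha_d$ is a basis. It therefore suffices to find $0\ne x\in\Lambda$ with
\[ |x_0|\ll T,\qquad |L_1(x)|\ll T^{-1/d},\qquad |L_i(x)|\ll T^{-1/d}(\log T)^{-1/(d-1)}\quad (i\ge 2). \]
Replacing $\beta$ by $-\beta$ if necessary, this gives a positive $n\le CT$ (the case $n=0$ is excluded for large $T$, since then the $m_i$ would be integers of size $<1$, forcing $\beta=0$). Finally, $n^{-1/d}\ge (CT)^{-1/d}$ converts the $T$-bounds into the $n$-bounds of the theorem.

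The box defined by these inequalities has volume smaller than the covolume of $\Lambda$ by a factor of order $(\log T)^{-1}$, so Minkowski's theorem applied to a box alone gives nothing; this is exactly Dirichlet's theorem. The extra factor $\log T$ has to come from the unit group. Let $\mc{U}$ be the set of totally positive units $u$ with $1\le\sigma_0(u)\le T^{c}$ and all other conjugates in a window making $u$ act as a "stretch" of size at most $T$. There are $\asymp(\log T)^{d-1}$ such units, by Dirichlet's unit theorem and counting points of the log-unit lattice in a dilated simplex. Starting from a small box $B_0$ of volume $\asymp1$, consider the convex body
\[ \mc{B}=\operatorname{conv}\Bigl(\bigcup_{u\in\mc{U}} u\cdot B_0\Bigr), \]
where $u$ acts coordinatewise by $(\sigma_j u)$. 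This set is symmetric, convex and contained in $\{|x_0|\ll T\}$. Because each $L_i$ is linear, $\sup_{\mc{B}}|L_i|$ equals the maximum of $|L_i|$ over the vertices of the $u B_0$. I would then choose $B_0$ and the window so that on every $uB_0$ the coordinate $x_0$ carries the size $T$, while the directions $L_2,\ldots,L_d$ are uniformly contracted to $T^{-1/d}(\log T)^{-1/(d-1)}$, with the slack absorbed in $L_1$. Equivalently, I would design $\mc{B}$ directly as an intersection of the required slabs, and then show it contains this hull.

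The key step, and the main obstacle, is the volume estimate $\mathrm{vol}(\mc{B})\ge 2^{d+1}\operatorname{covol}(\Lambda)$. The $d$-dimensional volume of $\mc{B}$ is not the sum of the volumes of the $uB_0$, since these overlap heavily near the origin. The gain must instead come from the hull spreading out along a $(d-1)$-dimensional simplex in logarithmic coordinates of side $\asymp\log T$. I would first try to exhibit, inside $\mc{B}$, $\asymp(\log T)^{d-1}$ disjoint pieces (for instance one near each vertex $uB_0$, truncated appropriately) each of volume $\gg(\log T)^{-(d-2)}\cdot\mathrm{vol}(B_0)$, or to compute the volume of the simplex spanned by the vertices after a logarithmic change of variables. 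If that is too lossy, the fallback is to take the body to be the slab intersection itself, and to bound its volume below by integrating over $x_0$ and using that the $(x_1,\ldots,x_d)$-cross-section is a cross-polytope whose volume picks up the factor $\log T$. Once the volume bound is in hand, Minkowski's theorem yields $x\in\Lambda\setminus\{0\}$ in $\mc{B}$, and the first paragraph finishes the proof.
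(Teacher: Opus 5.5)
There is a genuine gap, and it sits exactly at the step you flag as the main obstacle: the volume estimate $\mathrm{vol}(\mc{B})\ge 2^{d+1}\operatorname{covol}(\Lambda)$ is false, not just hard. Since $x_0,L_1,\ldots,L_d$ are linearly independent linear forms on $\R^{d+1}$, any convex body on which your inequalities hold lies in the parallelepiped $P_T=\{|x_0|\le CT,\ |L_1|\le CT^{-1/d},\ |L_i|\le CT^{-1/d}(\log T)^{-1/(d-1)}\ (i\ge 2)\}$. You noted yourself that its volume is $\asymp_C(\log T)^{-1}$. Convex hulls, disjoint pieces, or logarithmic changes of variable cannot produce more volume than the container has.

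Not even one $uB_0$ fits: multiplication by a unit has determinant $\Nm(u)=\pm1$, so $\mathrm{vol}(uB_0)=\mathrm{vol}(B_0)\asymp 1$. Your fallback is $P_T$ itself. Its cross-sections in $(x_1,\ldots,x_d)$ are parallelotopes of volume $\asymp T^{-1}(\log T)^{-1}$, not cross-polytopes, so there is no $\log T$ gain. The lattice point that proves the theorem lies in a region of vanishing volume, and Minkowski's theorem in $\R^{d+1}$ cannot detect it.

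Your reduction also asks for more than the theorem gives. Since $|\Nm(\beta)|$ is bounded below on $M^*\setminus\{0\}$, and all $|\sigma_j(\beta)|\ll T^{-1/d}$ for $j\ge1$, any point meeting your $T$-bounds has $|x_0|\asymp T$. So you would need a good denominator $n\asymp T$ at every scale. The theorem measures errors against $n^{-1/d}$, and the denominator actually produced need not be comparable to $T$.

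The missing idea is to apply Minkowski's theorem in logarithmic space, to the unit group itself. The paper uses the lattice $\Gamma\subseteq\R^d$ of vectors $(\log u,\lambda_1(u),\ldots,\lambda_{r+z-1}(u),\bth)$, where $\lambda_k(u)=\log|\sigma_k(u)|+\frac1d\log u$ and $\bth$ records arguments at complex places. It takes the box $|x_0|\le\frac12\log T$, $|x_k|\le\varepsilon$. There $\log T$ is a side length, so the volume $\log T\,(2\varepsilon)^{d-1}$ reaches $2^d\operatorname{covol}(\Gamma)$ with $\varepsilon\asymp(\log T)^{-1/(d-1)}$; this is where the logarithm is gained. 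After squaring, the output is a unit $1<u\le T$ with $|u^{1/d}\sigma_k(u)-1|\ll(\log T)^{-1/(d-1)}$ for $1\le k\le d$.

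One then uses the single element $\alpha_1^*u\in M^*$. Here $\alpha_1^*$ is the dual basis vector with $\Tr(\alpha_1^*)=0$ and $\Tr(\alpha_i\alpha_1^*)=\delta_{i1}$. Its denominator is $q=\alpha_1^*u+O(u^{-1/d})$, and its error vector is $u^{-1/d}(e_1+O(\varepsilon))$. Moreover $q\neq0$, since otherwise $u$ would be a rational integer. With $n=|q|\ll u$ (run at $T/C'$ so that $n\le T$), this gives $\|n\alpha_1\|\ll n^{-1/d}$ and $\|n\alpha_i\|\ll n^{-1/d}(\log T)^{-1/(d-1)}$, with no lower bound on $u$ needed.

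Your intuitions are right: the $\log T$ must come from the unit group and from a region of side $\log T$ in logarithmic coordinates. But the gain exists only as a volume in those coordinates. The approximation must then be read off from one unit multiple of a fixed trace-zero element, with bounds relative to $n$ rather than $T$.
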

Note that \eqref{eqn.Peck} implies that
\[\liminf_{n\rar\infty}n\,\log n\,\|n\alpha_1\|\,\|n\alpha_2\|\cdots\|n\alpha_d\|<\infty,\]
which, when $d=2$, is a stronger conclusion than \eqref{eqn.LC}. The goal of this paper is to explain how Peck's proof can be understood in an intuitive way, using basic results from algebraic number theory and the geometry of numbers. For Theorems \ref{thm.Main} and Theorem \ref{thm.Padic} below, we do not claim novelty for the underlying ideas, but instead focus on their organization and presentation. The same framework, with a convex body of a different shape, gives Theorems \ref{thm.Biquad} and \ref{thm.Weighted}, which concern a conjecture raised by Peck in the same paper.

The idea of the proof, which will be explained in more detail shortly, is the following. The lattice $\Lambda^*$ dual to $\Z+\alpha_1\Z+\cdots+\alpha_d\Z$, with respect to the trace form, has a basis element $\alpha_1^*$ of trace zero. Multiplying $\alpha_1^*$ by a unit $u$ of the multiplier ring of $\Lambda$ produces another element of $\Lambda^*$, and every element of $\Lambda^*$ with nonzero trace corresponds to a simultaneous rational approximation to $\balph$. The denominator of the approximation corresponding to $\alpha_1^*u$ is essentially $\alpha_1^*u$, and its vector of approximation errors is a fixed linear function of the conjugates $\sigma_1(\alpha_1^*u),\ldots ,\sigma_d(\alpha_1^*u)$. If the conjugates $\sigma_1(u),\ldots,\sigma_d(u)$ were all equal, the error vector of $\alpha_1^*u$ would be a scalar multiple of the error vector of $\alpha_1^*$, which is the standard basis vector $e_1$. However, they cannot all be equal, but Minkowski's theorem, applied in the lattice of logarithms of units, produces units below any given height for which they are nearly equal, and the logarithm in Theorem \ref{thm.Main} measures how nearly. Figures \ref{fig.PeckCubic} and \ref{fig.PeckTotallyReal} illustrate this in the two cubic cases.

\begin{figure}[b]
\caption{The complex cubic case, $K=\Q(2^{1/3})$, with $\alpha_1=2^{1/3}$, $\alpha_2=2^{2/3}$, $\alpha_1^{*}=2^{2/3}/6$, and fundamental unit $u_1=1+2^{1/3}+2^{2/3}$. (a) The points $\sigma_1(\alpha_1^{*}u_1^{k})$ for $0\le k\le 5$, on a logarithmic radial scale. Every unit satisfies $|\sigma_1(u)|=u^{-1/2}$, so only the argument $\theta(u)$ of $\sigma_1(u)$ can move $\alpha_1^*u$ off the line $\ell$ through $\sigma_1(\alpha_1^{*})$. (b) The normalized errors $n^{1/2}(n\alpha_i-p_i)$ for $n=|\Tr(\alpha_1^{*}u_1^{k})|$, $1\le k\le 32$, where $p_i$ is the nearest integer to $n\alpha_i$. The dashed square is what Dirichlet's theorem gives, and the shaded bands have half-height $1/\log n$ for the two highlighted denominators, $n=223$ and $n\approx 1.4\cdot 10^{18}$. The cross marks $-|\alpha_1^{*}|^{1/2}e_1$.\\}
\label{fig.PeckCubic}
\centering
\resizebox{\textwidth}{!}{%
  \begin{tikzpicture}[>={Stealth[length=4.5pt,width=3.2pt]},line join=round]
    \definecolor{pkAccent}{RGB}{18,68,138}
    \definecolor{pkGrey}{RGB}{125,125,125}
    \tikzset{orb/.style={circle,draw=pkGrey,fill=white,line width=.4pt,inner sep=0pt,minimum size=3.0pt},
             orbhi/.style={circle,draw=pkAccent,fill=pkAccent,inner sep=0pt,minimum size=4.0pt},
             lbl/.style={font=\scriptsize}, lblx/.style={font=\tiny,text=pkGrey},
             lead/.style={line width=.3pt,pkGrey}}

  \begin{scope}
    \fill[pkAccent!9] (0,0) -- (-12.00:3.20) arc[start angle=-12.00,end angle=12.00,radius=3.20] -- cycle;
    \draw[pkAccent!40,line width=.3pt] (0,0) -- (12.00:3.20) (0,0) -- (-12.00:3.20);
    \draw[pkGrey!28,dotted,line width=.4pt] (0,0) circle[radius=2.850];
    \draw[pkGrey!28,dotted,line width=.4pt] (0,0) circle[radius=2.540];
    \draw[pkGrey!28,dotted,line width=.4pt] (0,0) circle[radius=2.230];
    \draw[pkGrey!28,dotted,line width=.4pt] (0,0) circle[radius=1.920];
    \draw[pkGrey!28,dotted,line width=.4pt] (0,0) circle[radius=1.610];
    \draw[pkGrey!28,dotted,line width=.4pt] (0,0) circle[radius=1.300];
    \draw[densely dashed,line width=.55pt] (0,0) -- (3.45,0);
    \node[lbl,anchor=south west] at (3.49,0.06) {$\ell$};
    \draw[pkGrey!75,line width=.45pt] plot[domain=0:1,samples=50,smooth] ({(2.8500+(-0.3100)*\x)*cos((0.0000)+(-146.2010)*\x)},{(2.8500+(-0.3100)*\x)*sin((0.0000)+(-146.2010)*\x)});
    \draw[pkGrey!75,line width=.45pt] plot[domain=0:1,samples=50,smooth] ({(2.5400+(-0.3100)*\x)*cos((-146.2010)+(-146.2010)*\x)},{(2.5400+(-0.3100)*\x)*sin((-146.2010)+(-146.2010)*\x)});
    \draw[pkGrey!75,line width=.45pt] plot[domain=0:1,samples=50,smooth] ({(2.2300+(-0.3100)*\x)*cos((67.5981)+(-146.2010)*\x)},{(2.2300+(-0.3100)*\x)*sin((67.5981)+(-146.2010)*\x)});
    \draw[pkGrey!75,line width=.45pt] plot[domain=0:1,samples=50,smooth] ({(1.9200+(-0.3100)*\x)*cos((-78.6029)+(-146.2010)*\x)},{(1.9200+(-0.3100)*\x)*sin((-78.6029)+(-146.2010)*\x)});
    \draw[pkGrey!75,line width=.45pt] plot[domain=0:1,samples=50,smooth] ({(1.6100+(-0.3100)*\x)*cos((135.1961)+(-146.2010)*\x)},{(1.6100+(-0.3100)*\x)*sin((135.1961)+(-146.2010)*\x)});
    \draw[pkAccent,line width=.55pt] (0,0) -- (-11.005:3.200);
    \draw[pkAccent,line width=.5pt] (3.02,0) arc[start angle=0,end angle=-11.005,radius=3.02];
    \node[lbl,text=pkAccent,anchor=north west] at (3.16,-0.30) {$\theta(u)$};
    \draw[pkAccent,line width=.7pt,->] (2.72,-0.16) .. controls (2.05,-0.95) .. (1.416,-0.418);
    \node[lbl,text=pkAccent,anchor=north] at (2.16,-0.92) {$\times\,\sigma_1(u)$};
    \node[orbhi] (zA0) at (0.000:2.850) {};
    \node[orb] (zA1) at (-146.201:2.540) {};
    \node[orb] (zA2) at (67.598:2.230) {};
    \node[orb] (zA3) at (-78.603:1.920) {};
    \node[orb] (zA4) at (135.196:1.610) {};
    \node[orbhi] (zA5) at (-11.005:1.300) {};
    \node[lbl,anchor=south] at (2.830,0.13) {$\sigma_1(\alpha_1^{*})$};
    \draw[lead] (1.156,-0.318) -- (1.000,-0.580);
    \node[lbl,text=pkAccent,anchor=north east] at (1.06,-0.55) {$\sigma_1(\alpha_1^{*}u_1^{5})$};
    \node[lblx] at (-146.201:2.790) {$u_1^{1}$};
    \node[lblx] at (67.598:2.480) {$u_1^{2}$};
    \node[lblx] at (-78.603:2.170) {$u_1^{3}$};
    \node[lblx] at (135.196:1.860) {$u_1^{4}$};
    \node[circle,fill=black,inner sep=0pt,minimum size=2.4pt] at (0,0) {};
    \node[lbl,anchor=south east] at (-0.06,0.03) {$0$};
    \node[font=\footnotesize,anchor=south] at (0,3.50) {(a)\quad in the field: the $\sigma_1$-plane};
  \end{scope}

  \begin{scope}[xshift=7.95cm]
    \fill[pkAccent!8] (-2.200,-0.407) rectangle (2.200,0.407);
    \draw[pkAccent!45,line width=.3pt,dotted] (-2.200,-0.407) rectangle (2.200,0.407);
    \fill[pkAccent!70] (-2.200,-0.053) rectangle (2.200,0.053);
    \draw[densely dashed,line width=.55pt] (-2.200,-2.200) rectangle (2.200,2.200);
    \draw[->,line width=.4pt] (-2.75,0) -- (3.15,0);
    \draw[->,line width=.4pt] (0,-2.40) -- (0,2.82);
    \node[lbl,anchor=south east] at (3.17,0.07) {$\ell'$};
    \node[lbl,anchor=north] at (0,-2.65) {$n^{1/2}(n\alpha_1-p_1)$};
    \node[lbl,anchor=south west] at (0.12,2.86) {$n^{1/2}(n\alpha_2-p_2)$};
    \node[orb] at (0.572,-0.908) {};
    \node[orb] at (0.175,1.538) {};
    \node[orb] at (-0.862,-1.610) {};
    \node[orb] at (1.264,1.160) {};
    \node[orb] at (0.790,-0.638) {};
    \node[orb] at (-0.078,1.374) {};
    \node[orb] at (-0.661,-1.646) {};
    \node[orb] at (1.176,1.362) {};
    \node[orb] at (-1.294,-0.617) {};
    \node[orb] at (0.974,-0.336) {};
    \node[orb] at (-0.325,1.176) {};
    \node[orb] at (-0.434,-1.618) {};
    \node[orb] at (1.046,1.513) {};
    \node[orb] at (-1.305,-0.897) {};
    \node[orb] at (1.123,-0.023) {};
    \node[orb] at (-0.561,0.934) {};
    \node[orb] at (-0.190,-1.530) {};
    \node[orb] at (0.877,1.609) {};
    \node[orb] at (-1.268,-1.144) {};
    \node[orb] at (1.230,0.292) {};
    \node[orb] at (-0.776,0.659) {};
    \node[orb] at (0.060,-1.387) {};
    \node[orb] at (0.676,1.646) {};
    \node[orb] at (-1.184,-1.349) {};
    \node[orb] at (1.291,0.596) {};
    \node[orb] at (-0.962,0.358) {};
    \node[orb] at (0.308,-1.192) {};
    \node[orb] at (0.451,1.622) {};
    \node[orb] at (-1.057,-1.504) {};
    \node[orb] at (1.306,0.878) {};
    \node[orbhi] (b5) at (-1.235,-0.314) {};
    \node[orbhi] (b32) at (-1.113,0.045) {};
    \node[lbl,text=pkAccent,anchor=north west] at (-1.130,-0.400) {$u_1^{5}$};
    \draw[lead] (-1.163,0.125) -- (-1.200,0.420);
    \node[lbl,text=pkAccent,anchor=south east] at (-1.160,0.440) {$u_1^{32}$};
    \draw[line width=.6pt,pkGrey] (-1.262,-0.13) -- (-1.002,0.13) (-1.262,0.13) -- (-1.002,-0.13);
    \draw[lead] (1.900,0.407) -- (2.500,1.050);
    \draw[lead] (2.140,0.053) -- (2.500,0.980);
    \node[lbl,anchor=west] at (2.460,1.200) {$\pm\dfrac{1}{\log n}$};
    \node[lbl,anchor=north east] at (-2.240,-2.260) {Dirichlet};
    \node[font=\footnotesize,anchor=south] at (0,3.50) {(b)\quad in approximation space};
  \end{scope}
  \end{tikzpicture}
}
\end{figure}

\begin{figure}[t]
\caption{The totally real cubic case, $K=\Q(\rho)$ with $\rho=2\cos(2\pi/7)$, $\alpha_1=\rho$, $\alpha_2=\rho^{2}$, and $\alpha_1^{*}=(-1+2\rho+\rho^{2})/7$. (a) Images of $\alpha_1^{*}$ under totally positive units in the $(\sigma_1,\sigma_2)$-plane, labelled by $\delta(u)=\log|\sigma_1(u)|-\log|\sigma_2(u)|$. Here $\sigma_1(u)\sigma_2(u)=1/u$ is forced, so each image lies on a prescribed hyperbola (drawn on a compressed scale), and only $\delta(u)$ can move it off $\ell$. (b) As in Figure \ref{fig.PeckCubic}, for the totally positive units $u$ with $1<u\le 10^{10}$ and $|\delta(u)|\le 2.6$, here with $n=\Tr(\alpha_1^{*}u)$ and $p_i=\Tr(\alpha_i\alpha_1^{*}u)$ (for $n\ge 7$ these are the nearest integers to $n\alpha_i$); the bands are for $n=182$ and $n\approx 2.6\cdot 10^{9}$.\\}
\label{fig.PeckTotallyReal}
\centering
\resizebox{\textwidth}{!}{%
  \begin{tikzpicture}[>={Stealth[length=4.5pt,width=3.2pt]},line join=round]
    \definecolor{pkAccent}{RGB}{18,68,138}
    \definecolor{pkGrey}{RGB}{125,125,125}
    \tikzset{orb/.style={circle,draw=pkGrey,fill=white,line width=.4pt,inner sep=0pt,minimum size=3.0pt},
             orbhi/.style={circle,draw=pkAccent,fill=pkAccent,inner sep=0pt,minimum size=3.8pt},
             lbl/.style={font=\scriptsize}, lblx/.style={font=\tiny,text=pkGrey},
             lead/.style={line width=.3pt,pkGrey}}

  \begin{scope}
    \fill[pkAccent!9] (1.600,1.350) -- (-2.300,-0.967) -- (-1.289,-1.778) -- cycle;
    \draw[pkAccent!40,line width=.3pt] (1.600,1.350) -- (-2.300,-0.967) (1.600,1.350) -- (-1.289,-1.778);
    \draw[pkGrey!55,dotted,line width=.5pt] plot coordinates{(0.067,-2.100) (0.051,-2.064) (0.035,-2.029) (0.018,-1.994) (0.002,-1.960) (-0.015,-1.926) (-0.032,-1.892) (-0.049,-1.858) (-0.066,-1.825) (-0.083,-1.793) (-0.101,-1.760) (-0.119,-1.728) (-0.137,-1.696) (-0.155,-1.665) (-0.173,-1.634) (-0.191,-1.603) (-0.210,-1.572) (-0.229,-1.542) (-0.248,-1.512) (-0.267,-1.483) (-0.287,-1.454) (-0.306,-1.425) (-0.326,-1.396) (-0.346,-1.368) (-0.367,-1.340) (-0.387,-1.312) (-0.408,-1.284) (-0.429,-1.257) (-0.450,-1.230) (-0.471,-1.204) (-0.493,-1.177) (-0.515,-1.151) (-0.537,-1.126) (-0.559,-1.100) (-0.582,-1.075) (-0.604,-1.050) (-0.627,-1.025) (-0.651,-1.000) (-0.674,-0.976) (-0.698,-0.952) (-0.722,-0.928) (-0.746,-0.905) (-0.770,-0.882) (-0.795,-0.859) (-0.820,-0.836) (-0.845,-0.813) (-0.871,-0.791) (-0.897,-0.769) (-0.923,-0.747) (-0.949,-0.725) (-0.976,-0.704) (-1.002,-0.683) (-1.030,-0.662) (-1.057,-0.641) (-1.085,-0.620) (-1.113,-0.600) (-1.141,-0.580) (-1.170,-0.560) (-1.198,-0.540) (-1.228,-0.521) (-1.257,-0.502) (-1.287,-0.482) (-1.317,-0.464) (-1.347,-0.445) (-1.378,-0.426) (-1.409,-0.408) (-1.441,-0.390) (-1.472,-0.372) (-1.504,-0.354) (-1.537,-0.337) (-1.569,-0.319) (-1.602,-0.302) (-1.636,-0.285) (-1.670,-0.268) (-1.704,-0.251) (-1.738,-0.235) (-1.773,-0.218) (-1.808,-0.202) (-1.844,-0.186) (-1.879,-0.170) (-1.916,-0.155) (-1.952,-0.139) (-1.989,-0.124) (-2.027,-0.109) (-2.065,-0.094) (-2.103,-0.079) (-2.141,-0.064) (-2.181,-0.049) (-2.220,-0.035) (-2.260,-0.021) (-2.300,-0.006)};
    \draw[pkGrey!55,dotted,line width=.5pt] plot coordinates{(0.641,-2.100) (0.626,-2.047) (0.611,-1.994) (0.595,-1.942) (0.580,-1.891) (0.564,-1.841) (0.547,-1.792) (0.531,-1.743) (0.514,-1.695) (0.497,-1.648) (0.479,-1.602) (0.462,-1.556) (0.444,-1.511) (0.426,-1.467) (0.407,-1.424) (0.389,-1.381) (0.370,-1.338) (0.350,-1.297) (0.331,-1.256) (0.311,-1.216) (0.290,-1.176) (0.270,-1.137) (0.249,-1.098) (0.228,-1.060) (0.206,-1.023) (0.184,-0.986) (0.162,-0.950) (0.139,-0.915) (0.117,-0.880) (0.093,-0.845) (0.070,-0.811) (0.045,-0.778) (0.021,-0.745) (-0.004,-0.712) (-0.029,-0.681) (-0.055,-0.649) (-0.081,-0.618) (-0.107,-0.588) (-0.134,-0.558) (-0.161,-0.528) (-0.189,-0.499) (-0.217,-0.471) (-0.245,-0.442) (-0.274,-0.415) (-0.304,-0.387) (-0.334,-0.361) (-0.364,-0.334) (-0.395,-0.308) (-0.426,-0.282) (-0.458,-0.257) (-0.490,-0.232) (-0.523,-0.208) (-0.557,-0.184) (-0.591,-0.160) (-0.625,-0.137) (-0.660,-0.114) (-0.695,-0.091) (-0.731,-0.069) (-0.768,-0.047) (-0.805,-0.025) (-0.843,-0.004) (-0.882,0.017) (-0.921,0.038) (-0.960,0.058) (-1.000,0.078) (-1.041,0.098) (-1.083,0.117) (-1.125,0.136) (-1.168,0.155) (-1.211,0.173) (-1.255,0.192) (-1.300,0.209) (-1.346,0.227) (-1.392,0.244) (-1.439,0.262) (-1.487,0.278) (-1.535,0.295) (-1.585,0.311) (-1.635,0.327) (-1.685,0.343) (-1.737,0.359) (-1.789,0.374) (-1.843,0.389) (-1.897,0.404) (-1.952,0.419) (-2.008,0.433) (-2.064,0.447) (-2.122,0.461) (-2.180,0.475) (-2.240,0.489) (-2.300,0.502)};
    \draw[pkGrey!55,dotted,line width=.5pt] plot coordinates{(0.919,-2.100) (0.905,-2.034) (0.892,-1.969) (0.878,-1.905) (0.864,-1.843) (0.849,-1.781) (0.835,-1.721) (0.820,-1.662) (0.804,-1.604) (0.789,-1.548) (0.773,-1.492) (0.757,-1.437) (0.740,-1.384) (0.723,-1.331) (0.706,-1.280) (0.689,-1.230) (0.671,-1.180) (0.653,-1.131) (0.634,-1.084) (0.615,-1.037) (0.596,-0.991) (0.576,-0.946) (0.556,-0.902) (0.536,-0.859) (0.515,-0.817) (0.494,-0.775) (0.472,-0.734) (0.450,-0.694) (0.428,-0.655) (0.405,-0.616) (0.381,-0.579) (0.357,-0.542) (0.333,-0.505) (0.308,-0.470) (0.283,-0.435) (0.257,-0.401) (0.231,-0.367) (0.204,-0.334) (0.177,-0.302) (0.149,-0.270) (0.120,-0.239) (0.091,-0.208) (0.062,-0.178) (0.032,-0.149) (0.001,-0.120) (-0.030,-0.092) (-0.062,-0.064) (-0.095,-0.037) (-0.128,-0.011) (-0.162,0.016) (-0.196,0.041) (-0.231,0.066) (-0.267,0.091) (-0.304,0.115) (-0.341,0.139) (-0.379,0.162) (-0.418,0.185) (-0.457,0.207) (-0.497,0.229) (-0.538,0.251) (-0.580,0.272) (-0.623,0.293) (-0.666,0.313) (-0.711,0.333) (-0.756,0.352) (-0.802,0.371) (-0.849,0.390) (-0.897,0.409) (-0.946,0.427) (-0.996,0.444) (-1.047,0.462) (-1.098,0.479) (-1.151,0.496) (-1.205,0.512) (-1.260,0.528) (-1.316,0.544) (-1.373,0.559) (-1.431,0.575) (-1.491,0.589) (-1.551,0.604) (-1.613,0.618) (-1.676,0.632) (-1.740,0.646) (-1.805,0.660) (-1.872,0.673) (-1.940,0.686) (-2.009,0.699) (-2.080,0.711) (-2.152,0.723) (-2.225,0.735) (-2.300,0.747)};
    \draw[pkGrey!55,dotted,line width=.5pt] plot coordinates{(1.001,-2.100) (0.988,-2.029) (0.975,-1.959) (0.962,-1.891) (0.948,-1.824) (0.935,-1.759) (0.921,-1.695) (0.907,-1.632) (0.892,-1.571) (0.877,-1.511) (0.862,-1.452) (0.846,-1.394) (0.830,-1.338) (0.814,-1.282) (0.798,-1.228) (0.781,-1.175) (0.764,-1.123) (0.746,-1.072) (0.728,-1.022) (0.710,-0.973) (0.691,-0.926) (0.672,-0.879) (0.652,-0.833) (0.633,-0.788) (0.612,-0.744) (0.591,-0.701) (0.570,-0.658) (0.549,-0.617) (0.526,-0.577) (0.504,-0.537) (0.481,-0.498) (0.457,-0.460) (0.433,-0.423) (0.409,-0.386) (0.384,-0.351) (0.358,-0.315) (0.332,-0.281) (0.305,-0.248) (0.278,-0.215) (0.250,-0.182) (0.222,-0.151) (0.193,-0.120) (0.163,-0.090) (0.133,-0.060) (0.102,-0.031) (0.071,-0.003) (0.039,0.025) (0.006,0.053) (-0.028,0.079) (-0.062,0.105) (-0.097,0.131) (-0.132,0.156) (-0.169,0.181) (-0.206,0.205) (-0.244,0.228) (-0.283,0.251) (-0.322,0.274) (-0.363,0.296) (-0.404,0.318) (-0.446,0.339) (-0.489,0.360) (-0.533,0.380) (-0.578,0.400) (-0.624,0.420) (-0.670,0.439) (-0.718,0.458) (-0.767,0.476) (-0.817,0.494) (-0.868,0.512) (-0.919,0.529) (-0.972,0.546) (-1.026,0.563) (-1.082,0.579) (-1.138,0.595) (-1.196,0.610) (-1.254,0.625) (-1.314,0.640) (-1.376,0.655) (-1.438,0.669) (-1.502,0.683) (-1.567,0.697) (-1.634,0.710) (-1.702,0.724) (-1.771,0.737) (-1.842,0.749) (-1.915,0.762) (-1.989,0.774) (-2.064,0.786) (-2.141,0.797) (-2.220,0.809) (-2.300,0.820)};
    \draw[pkGrey!55,dotted,line width=.5pt] plot coordinates{(1.174,-2.100) (1.163,-2.016) (1.152,-1.934) (1.141,-1.855) (1.130,-1.777) (1.118,-1.701) (1.106,-1.627) (1.094,-1.554) (1.081,-1.484) (1.068,-1.415) (1.055,-1.348) (1.042,-1.282) (1.028,-1.218) (1.013,-1.156) (0.999,-1.095) (0.984,-1.035) (0.968,-0.977) (0.953,-0.921) (0.937,-0.866) (0.920,-0.812) (0.903,-0.759) (0.886,-0.708) (0.868,-0.658) (0.850,-0.609) (0.831,-0.562) (0.812,-0.515) (0.792,-0.470) (0.772,-0.426) (0.752,-0.382) (0.730,-0.340) (0.709,-0.299) (0.687,-0.259) (0.664,-0.220) (0.641,-0.182) (0.617,-0.145) (0.592,-0.108) (0.567,-0.073) (0.541,-0.038) (0.515,-0.005) (0.488,0.028) (0.460,0.060) (0.432,0.092) (0.403,0.122) (0.373,0.152) (0.342,0.181) (0.311,0.210) (0.279,0.237) (0.246,0.264) (0.212,0.291) (0.178,0.317) (0.142,0.342) (0.106,0.366) (0.069,0.390) (0.031,0.413) (-0.008,0.436) (-0.049,0.458) (-0.090,0.480) (-0.132,0.501) (-0.175,0.522) (-0.219,0.542) (-0.264,0.562) (-0.311,0.581) (-0.358,0.599) (-0.407,0.618) (-0.457,0.635) (-0.508,0.653) (-0.561,0.670) (-0.615,0.686) (-0.670,0.702) (-0.726,0.718) (-0.784,0.734) (-0.844,0.749) (-0.905,0.763) (-0.967,0.777) (-1.031,0.791) (-1.096,0.805) (-1.164,0.818) (-1.232,0.831) (-1.303,0.844) (-1.375,0.856) (-1.449,0.868) (-1.525,0.880) (-1.603,0.891) (-1.683,0.902) (-1.765,0.913) (-1.849,0.924) (-1.934,0.934) (-2.023,0.944) (-2.113,0.954) (-2.205,0.964) (-2.300,0.973)};
    \draw[pkGrey!55,dotted,line width=.5pt] plot coordinates{(1.334,-2.100) (1.326,-1.999) (1.317,-1.900) (1.309,-1.805) (1.300,-1.712) (1.291,-1.622) (1.281,-1.535) (1.272,-1.450) (1.262,-1.368) (1.252,-1.288) (1.241,-1.210) (1.230,-1.135) (1.219,-1.062) (1.207,-0.991) (1.196,-0.923) (1.183,-0.856) (1.171,-0.791) (1.158,-0.728) (1.144,-0.667) (1.131,-0.608) (1.116,-0.550) (1.102,-0.494) (1.087,-0.440) (1.071,-0.388) (1.055,-0.337) (1.039,-0.287) (1.022,-0.239) (1.004,-0.192) (0.986,-0.147) (0.967,-0.103) (0.948,-0.060) (0.929,-0.019) (0.908,0.021) (0.887,0.060) (0.866,0.098) (0.844,0.135) (0.821,0.171) (0.797,0.205) (0.773,0.239) (0.748,0.272) (0.722,0.303) (0.695,0.334) (0.668,0.364) (0.640,0.393) (0.611,0.421) (0.581,0.448) (0.550,0.475) (0.518,0.501) (0.485,0.525) (0.452,0.550) (0.417,0.573) (0.381,0.596) (0.344,0.618) (0.306,0.640) (0.267,0.661) (0.227,0.681) (0.185,0.700) (0.142,0.720) (0.098,0.738) (0.053,0.756) (0.006,0.774) (-0.042,0.790) (-0.092,0.807) (-0.143,0.823) (-0.196,0.838) (-0.251,0.853) (-0.307,0.868) (-0.364,0.882) (-0.424,0.896) (-0.485,0.909) (-0.548,0.922) (-0.613,0.935) (-0.680,0.947) (-0.749,0.959) (-0.820,0.970) (-0.893,0.981) (-0.969,0.992) (-1.047,1.003) (-1.127,1.013) (-1.209,1.023) (-1.294,1.032) (-1.382,1.042) (-1.472,1.051) (-1.565,1.060) (-1.661,1.068) (-1.760,1.076) (-1.861,1.084) (-1.966,1.092) (-2.074,1.100) (-2.185,1.107) (-2.300,1.114)};
    \draw[densely dashed,line width=.55pt] (1.600,1.350) -- (-2.300,-1.778);
    \node[lbl,anchor=north west] at (-2.280,-1.758) {$\ell$};
    \draw[->,line width=.4pt] (1.850,1.350) -- (-1.000,1.350);
    \draw[->,line width=.4pt] (1.600,1.600) -- (1.600,-0.700);
    \node[lbl,anchor=south west] at (-1.000,1.390) {$\sigma_1$};
    \node[lbl,anchor=north west] at (1.660,-0.700) {$\sigma_2$};
    \node[circle,fill=black,inner sep=0pt,minimum size=2.4pt] at (1.600,1.350) {};
    \node[lbl,anchor=south west] at (1.650,1.400) {$0$};
    \draw[pkAccent,line width=1.0pt] plot coordinates{(0.246,0.264) (0.229,0.278) (0.213,0.291) (0.195,0.303) (0.178,0.316) (0.161,0.329) (0.143,0.341) (0.125,0.354) (0.107,0.366) (0.088,0.378) (0.070,0.389) (0.051,0.401) (0.032,0.413) (0.012,0.424) (-0.007,0.435) (-0.027,0.447) (-0.047,0.458) (-0.067,0.468) (-0.088,0.479) (-0.109,0.490) (-0.130,0.500) (-0.151,0.511) (-0.173,0.521) (-0.194,0.531) (-0.217,0.541)};
    \draw[lead] (0.062,0.473) -- (0.452,0.963);
    \node[lbl,text=pkAccent,anchor=south] at (0.452,0.983) {$\tfrac12\delta(u)$};
    \draw[pkAccent,line width=.7pt,->] (-0.868,-0.790) .. controls (-0.400,-1.280) .. (0.095,-1.063);
    \node[lbl,text=pkAccent,anchor=north] at (-0.420,-1.340) {$\times\,(\sigma_1(u),\sigma_2(u))$};
    \node[orbhi] at (-0.968,-0.710) {};
    \node[lbl,anchor=north east] at (-1.028,-0.810) {$\bigl(\sigma_1(\alpha_1^{*}),\sigma_2(\alpha_1^{*})\bigr)$};
    \node[orbhi] at (0.195,-1.004) {};
    \node[lblx,text=pkAccent,anchor=west] at (0.305,-1.024) {$\delta=-0.74$};
    \node[orb] at (-1.720,0.642) {};
    \node[lblx,anchor=south] at (-1.720,0.762) {$\delta=+1.32$};
    \node[orb] at (0.831,-1.339) {};
    \node[lblx,anchor=west] at (0.941,-1.339) {$\delta=-1.47$};
    \node[orbhi] at (-0.217,0.541) {};
    \node[lblx,text=pkAccent,anchor=east] at (-0.327,0.561) {$\delta=+0.59$};
    \node[orbhi] at (0.606,0.425) {};
    \node[lblx,text=pkAccent,anchor=west] at (0.716,0.445) {$\delta=-0.15$};
    \node[font=\footnotesize,anchor=south] at (-0.35,3.50) {(a)\quad in the field: the $(\sigma_1,\sigma_2)$-plane};
  \end{scope}

  \begin{scope}[xshift=7.55cm]
    \fill[pkAccent!8] (-3.100,-0.394) rectangle (2.050,0.394);
    \draw[pkAccent!45,line width=.3pt,dotted] (-3.100,-0.394) rectangle (2.050,0.394);
    \fill[pkAccent!70] (-3.100,-0.095) rectangle (2.050,0.095);
    \draw[densely dashed,line width=.55pt] (-2.050,-2.050) rectangle (2.050,2.050);
    \draw[->,line width=.4pt] (-3.40,0) -- (3.00,0);
    \draw[->,line width=.4pt] (0,-2.25) -- (0,2.67);
    \node[lbl,anchor=south east] at (3.02,0.07) {$\ell'$};
    \node[lbl,anchor=north] at (-0.70,-2.50) {$n^{1/2}(n\alpha_1-p_1)$};
    \node[lbl,anchor=south west] at (0.12,2.71) {$n^{1/2}(n\alpha_2-p_2)$};
    \node[orb] at (-1.544,-0.912) {};
    \node[orb] at (-1.467,0.319) {};
    \node[orb] at (-1.471,-0.625) {};
    \node[orb] at (-1.926,0.711) {};
    \node[orb] at (-1.337,-0.265) {};
    \node[orb] at (-2.595,1.191) {};
    \node[orb] at (-1.747,-0.984) {};
    \node[orb] at (-1.600,0.405) {};
    \node[orb] at (-1.440,-0.553) {};
    \node[orb] at (-2.044,0.801) {};
    \node[orb] at (-1.331,-0.197) {};
    \node[orb] at (-2.769,1.305) {};
    \node[orb] at (-2.161,-1.422) {};
    \node[orb] at (-1.404,0.132) {};
    \node[orb] at (-1.668,-0.888) {};
    \node[orb] at (-1.670,0.479) {};
    \node[orb] at (-1.403,-0.476) {};
    \node[orb] at (-2.165,0.891) {};
    \node[orb] at (-1.331,-0.130) {};
    \node[orb] at (-2.956,1.426) {};
    \node[orb] at (-2.040,-1.302) {};
    \node[orb] at (-1.442,0.199) {};
    \node[orb] at (-1.598,-0.798) {};
    \node[orb] at (-1.750,0.555) {};
    \node[orb] at (-1.374,-0.403) {};
    \node[orb] at (-2.298,0.987) {};
    \draw[line width=.6pt,pkGrey] (-1.513,-0.16) -- (-1.193,0.16) (-1.513,0.16) -- (-1.193,-0.16);
    \node[orbhi] at (-1.375,0.066) {};
    \node[orbhi] at (-1.339,-0.064) {};
    \draw[lead] (-1.153,-0.300) -- (-0.100,-1.300);
    \node[lbl,text=pkAccent,anchor=north west] at (-0.140,-1.280) {balanced units};
    \draw[lead] (1.750,0.394) -- (2.350,1.420);
    \node[lbl,anchor=west] at (2.310,1.420) {$\dfrac{1}{\log n}$, $n=182$};
    \draw[lead] (1.990,-0.095) -- (2.350,-0.850);
    \node[lbl,anchor=west] at (2.310,-0.850) {$\dfrac{1}{\log n}$, $n\approx2.6\cdot10^{9}$};
    \node[lbl,anchor=north west] at (-1.990,-2.110) {Dirichlet};
    \node[font=\footnotesize,anchor=south] at (-0.50,3.50) {(b)\quad in approximation space};
  \end{scope}
  \end{tikzpicture}
}
\end{figure}

The same argument, run inside a congruence subgroup of the unit group, gives the following result about denominators in arithmetic progressions.
\begin{theorem}\label{thm.Padic}
	Let $d\ge 2$ and suppose that $1,\alpha_1,\ldots,\alpha_d$ is a basis for a real algebraic number field $K$. Then there is a constant $C=C(\alpha_1,\ldots ,\alpha_d)$ such that, for every positive integer $m$, there are infinitely many positive integers $n$ divisible by $m$ with
	\begin{equation}\label{eqn.Padic1}
		n^{1/d}\max_{1\le i\le d}\|n\alpha_i\|\le C .
	\end{equation}
	Furthermore, for every prime $p$,
	\begin{equation}\label{eqn.Padic2}
		\liminf_{n\rar\infty}\ n^{1/d}(\log n)^{1/(r+z)}\,|n|_p\max_{1\le i\le d}\|n\alpha_i\|<\infty,
	\end{equation}
	where $r+1$ and $2z$ are the numbers of real and non-real embeddings of $K$, so that $r+z$ is the rank of its unit group.
\end{theorem}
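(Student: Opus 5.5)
We run the framework sketched in the introduction, now inside a finite-index subgroup of the unit group. Let $\Lambda=\Z+\alpha_1\Z+\cdots+\alpha_d\Z$, let $\mc O$ be its multiplier ring, and let $\Lambda^*$ be its dual with respect to the trace form, with dual basis $\alpha_0^*,\alpha_1^*,\ldots,\alpha_d^*$. Here $\Tr(\alpha_1^*)=0$ and $\Tr(\alpha_i\alpha_1^*)=\delta_{i1}$. For $\beta\in\Lambda^*$ with $n=\Tr(\beta)\neq 0$, put $p_i=\Tr(\alpha_i\beta)\in\Z$. Then
\[ n\alpha_i-p_i=\sum_{j\ge 1}\sigma_j(\beta)\bigl(\alpha_i-\sigma_j(\alpha_i)\bigr), \]
where $\sigma_0$ is the identity embedding and the sum runs over the other $d$ embeddings. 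Thus $|n|\asymp|\beta|$, and the error vector is a fixed invertible linear image of $(\sigma_1(\beta),\ldots,\sigma_d(\beta))$. For $\beta=\alpha_1^*$ this image is $e_1$.

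To get divisibility by $m$, we use the subgroup $U_m=\{u\in\mc O^\times : u\equiv 1 \pmod{m\mc O}\}$, which has finite index in $\mc O^\times$ and the same rank $r+z$. Since $m\Lambda^*$ is an $\mc O$-module, for $u\in U_m$ we have $\alpha_1^*u-\alpha_1^*\in m\Lambda^*$. Hence $n=\Tr(\alpha_1^*u)\equiv\Tr(\alpha_1^*)=0 \pmod m$, and more generally every $\Tr(\alpha_i\alpha_1^*u)\equiv\Tr(\alpha_i\alpha_1^*)=\delta_{i1}\pmod m$.

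For \eqref{eqn.Padic1}, fix $m$. Consider the log-embedding of $U_m$ into the hyperplane $\sum_j(\pm)\log|\sigma_j|=0$ of $\R^{r+z+1}$ (weight $2$ on complex places), and restrict to units with $u>0$ large. The set of units whose non-identity log coordinates are all within a fixed bounded distance of $-\tfrac1d\log u$ is a bounded-width tube around a ray. Since $U_m$ is a full lattice in the hyperplane, this tube contains infinitely many points, with $u\to\infty$. The tube width may depend on the covolume of $U_m$, hence on $m$. This is harmless, since only the multiplicative constant matters. Uniformity in $m$ comes from the following: for $u$ in the tube, $|\sigma_j(\alpha_1^*u)|\ll u^{-1/d}$ for all $j\ge 1$, with an implied constant depending only on the tube width. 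I would fix the width by a covering radius of the full unit lattice $\mc O^\times$ rather than of $U_m$. For this, pick $u\in U_m$ first and then note that... Actually a cleaner route is to take any $v\in\mc O^\times$ near the ray and set $u=v^{h}$, where $h=[\mc O^\times:U_m]$; but this multiplies the deviation by $h$. I would therefore argue instead via the order of $v$ in the finite group $\mc O^\times/U_m$: choose $v$ within bounded distance of $t$ times the ray direction, use pigeonhole on cosets among $v_t$ for $t=1,2,\ldots$, and take quotients $v_tv_s^{-1}\in U_m$. These stay within twice the bound. This gives infinitely many $u\in U_m$ with deviation at most a constant independent of $m$. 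Hence $n\mid m$-divisible and $\max_i\|n\alpha_i\|\le Cn^{-1/d}$ with $C$ independent of $m$.

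For \eqref{eqn.Padic2}, take $m=p^k$ and let $T$ be large, with $k\approx c\log\log T$ chosen so that the index $[\mc O^\times:U_{p^k}]$ is bounded by a small power of $\log T$. The structure of $(\mc O/p^k)^\times$ gives index $\ll p^{kd'}$. Apply the Minkowski convex body theorem in the rank-$(r+z)$ lattice $\log U_{p^k}$, as in the proof of Theorem \ref{thm.Main}. The body is a box of length about $\log T$ along the ray direction and width $\varepsilon$ in the $r+z-1$ transverse directions, including the argument directions $\theta$ for complex places as in Figure \ref{fig.PeckCubic}. Minkowski yields $u\in U_{p^k}$ with $u\le T$, provided
\[ \varepsilon^{r+z-1}\log T\gg\mathrm{covol}(U_{p^k})\asymp p^{kd'}. \]
The error vector of $\alpha_1^*u$ is then $n^{-1/d}(c\,e_1+O(\varepsilon))$, and $n\equiv 0$ modulo $p^k$. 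To avoid losing the factor $p^{kd'}$, it is better to absorb it: the quantity $|n|_p\le p^{-k}$ must beat the loss $p^{kd'/(r+z-1)}$. This shows that a naive choice is wrong, so I would instead take the body so that the savings come entirely from $|n|_p$. With $\varepsilon$ fixed, length $\log T$, and $p^{kd'}\asymp\log T$, we get $|n|_p\le p^{-k}\asymp(\log T)^{-1/d'}$. The main obstacle is optimizing this balance to reach exactly the exponent $1/(r+z)$. I expect it to come from letting both the length $L=\log T$ and the transverse directions share the budget. The volume is $L\varepsilon^{r+z-1}\gg p^{k\cdot(\text{rank})}$, where the relevant growth of $\mathrm{covol}(U_{p^k})$ in $k$ is $p^{k(r+z)}$ after passing to the pro-$p$ part, which is the key fact to verify. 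Then choosing $\varepsilon\asymp1$ gives $p^{-k}\asymp(\log T)^{-1/(r+z)}$, which yields \eqref{eqn.Padic2}.
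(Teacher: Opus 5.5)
Your argument for \eqref{eqn.Padic1} is correct, and it takes a different and more elementary route than the paper. The paper reruns the Minkowski argument of Lemma~\ref{lem.Balance} in the sublattice coming from $P\cap U_m$, whose covolume is at most $I_mV$. For $\log T\ge c_1I_m$ this gives a unit of $U_m$ with deviation $\ll (I_m/\log T)^{1/(d-1)}$, small enough that the $q\neq0$ argument of Section~\ref{sec.Proof} applies.

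Your route is a pigeonhole on cosets of $U_m$ among units $v_t>0$ of $\mathcal O^\times$. Each $v_t$ lies within the covering radius $R$ of $\varphi(\mathcal O^\times)$ of the point $t\cdot(1,-\tfrac1d,\ldots,-\tfrac1d)$. This needs no Minkowski and gives deviation at most $2R$, uniformly in $m$, which is all \eqref{eqn.Padic1} requires.

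Because your deviation is only bounded, not small, the paper's proof that $q\neq0$ is not available to you. You should say explicitly that $t,s$ are taken in a progression with large common difference $N$, so that $u=v_tv_s^{-1}$ is large. Then $\Tr(\alpha_1^*u)=\alpha_1^*u+O(u^{-1/d})$ is nonzero and $\asymp u$.

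The genuine gap is in \eqref{eqn.Padic2}. The whole exponent $1/(r+z)$ rests on the bound $[\mathcal O^\times:U_{p^k}]\ll_p p^{k(r+z)}$, which you flag as ``the key fact to verify'' and never prove. The bound you actually state comes from the order of $(\mathcal O/p^k\mathcal O)^\times$; it is $p^{k(d+1)}$ and yields only the exponent $1/(d+1)$. ``Passing to the pro-$p$ part'' does not help by itself, since the $p$-part of $(\mathcal O/p^k\mathcal O)^\times$ still has order about $p^{(k-1)(d+1)}$.

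The missing idea is to bound the index by the exponent of $(\mathcal O/p^k\mathcal O)^\times$ and the number of generators of $\mathcal O^\times$, not by the order of $(\mathcal O/p^k\mathcal O)^\times$:
\begin{itemize}
\item If $x\equiv1\bmod p^j\mathcal O$ with $j\ge1$, then $x^p\equiv1\bmod p^{j+1}\mathcal O$. So the kernel of reduction to $(\mathcal O/p\mathcal O)^\times$ has exponent dividing $p^{k-1}$.
\item Since $|(\mathcal O/p\mathcal O)^\times|<p^{d+1}$, every element of $(\mathcal O/p^k\mathcal O)^\times$ has order less than $p^{k+d}$.
\item Since $\mathcal O^\times$ is generated by $-1$ and $r+z$ further units, $[\mathcal O^\times:U_{p^k}]\le 2p^{(k+d)(r+z)}$.
\end{itemize}

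With this in hand, your own pigeonhole construction finishes the job, taking $h=[\mathcal O^\times:U_{p^k}]$ and a fixed large $N$. It gives $u\in U_{p^k}$ with bounded deviation and $\log u\le hN+2R\ll h$. Hence there is $n$ with $p^k\mid n$ and $n^{1/d}\max_i\|n\alpha_i\|\ll1$. Moreover $p^{-k}\ll_p h^{-1/(r+z)}\ll(\log n)^{-1/(r+z)}$, which is \eqref{eqn.Padic2}.

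The surrounding discussion of a ``loss $p^{kd'/(r+z-1)}$'' and of sharing the budget can then be dropped. Your Minkowski box is also mis-specified. If the argument directions are included, there are $d-1$ transverse directions, not $r+z-1$. And with $\varepsilon\asymp1$ you would still have to exclude lattice points with small $\log u$, where $n$ may vanish.
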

Taking $m=p^k$ in \eqref{eqn.Padic1} and letting $k\rar\infty$ shows that \[\liminf_{n}n^{1/d}\allowbreak|n|_p\allowbreak\max_i\|n\alpha_i\|=0,\] for every prime $p$, which is an analogue for $\balph$ of the $p$-adic Littlewood conjecture of de Mathan and Teuli\'e \cite{deMaTeul2004}. A special case of Th\'eor\`eme 3.1 in their paper establishes \eqref{eqn.Padic2} with an exponent of $1/d$ for the logarithm. The statement \eqref{eqn.Padic1}, with $C$ independent of $m$, was proved by Bugeaud \cite[Theorem 1.2]{Buge2015mult}, using the same algebraic construction as here but with a linear recurrence argument in place of Minkowski's theorem, and \cite[Theorem 1.3]{Buge2015mult} asserts \eqref{eqn.Padic2} with $\log n$ in place of $(\log n)^{1/(r+z)}$. However, de Mathan \cite{deMa2024} has pointed out that the argument given there appears to be incomplete for totally real cubic fields, and he proves the exponent $1/2$ in place of $1$ in that case \cite[Theorem 1.5]{deMa2024}. The difficulty is that the unit whose powers are followed in \cite{Buge2015mult} is chosen with a precision which depends on how many powers are needed, and the same issue seems to us to arise whenever $r+z>1$. The exponent $1/(r+z)$ in \eqref{eqn.Padic2} agrees with these results when $d=2$, and improves upon the exponent $1/d$ which follows from \cite[Th\'eor\`eme 3.1]{deMaTeul2004} whenever $K$ has a non-real embedding. We do not know whether the exponent $1$ holds in general, but numerical experiments suggest that it does; see the remark at the end of Section \ref{sec.PadicProof}. Here we also note that, upon making this paper public, Bugeaud and de Mathan informed us that they have recently proved a theorem which provides an improvement over the exponent in the logarithm in \eqref{eqn.Padic2} (see \cite[Theorem 1.2]{BugeDeMath2026}, as well as the comment after their theorem).

We remark that the savings in Theorem \ref{thm.Main} is genuinely logarithmic, in the sense that the factor $(\log T)^{-1/(d-1)}$ cannot be replaced by a positive power of $n^{-1}$: de Mathan \cite{deMa2009} has shown, using Baker's theory of linear forms in logarithms, that for these $\balph$ and for any $c>0$, every $n\ge 2$ with $\max_i\|n\alpha_i\|\le cn^{-1/d}$ satisfies $\max_{i\ge 2}\|n\alpha_i\|\gg_c n^{-1/d}(\log n)^{-\kappa}$ for some $\kappa=\kappa(c)$. An earlier result of the same kind, also obtained from Baker's method, is due to Moshchevitin \cite{Mosh1992}.

The logarithmic savings in \eqref{eqn.Peck} is split equally among $d-1$ of the coordinates, and Peck asked \cite[pp. 197--198]{Peck1961} whether it has to be apportioned in that way:
\begin{quote}
\small
``A number of interesting problems can be raised in connection with (3). In one direction it can be asked whether $n-1$ of the inequalities (2) can be improved with factors which are not all the same; e.g., one might conjecture that we can find infinitely many solutions of the inequalities
\[|q_0\beta_j-q_j\beta_0|<Cq_0^{-1/n}/f_j(q_0)\quad (j=1,\ldots ,n-1),\] \[|q_0\beta_n-q_n\beta_0|<Cq_0^{-1/n},\]
with $f_1(q_0)\cdots f_{n-1}(q_0)=\log q_0$ and $f_j(q_0)\ge 1$ $(j=1,\ldots ,n-1)$.''
\end{quote}

Note that Peck's degree is $n+1$ where ours is $d+1$, and the coordinate carrying no savings is the last one there rather than the first one here. Adjusting for this difference in notation, let us call functions $f_2,\ldots ,f_d$, defined for $T\ge 3$, \textit{admissible} if $f_i(T)\ge 1$ for each $i$ and $f_2(T)\cdots f_d(T)=\log T$ for every $T\ge 3$. Our third result proves the inequalities of Peck's conjecture, for every admissible choice of the $f_i$, for a family of quartic fields.
\begin{theorem}\label{thm.Biquad}
	Let $a$ and $b$ be positive integers, none of $a$, $b$, $ab$ being a perfect square, and set
	\[\alpha_1=\sqrt{ab},\qquad \alpha_2=\sqrt a,\qquad \alpha_3=\sqrt b,\]
	so that $1,\alpha_1,\alpha_2,\alpha_3$ is a basis for the real biquadratic field $\Q(\sqrt a,\sqrt b)$. Then there are constants $C$ and $T_0$, depending only on $a$ and $b$, such that for every admissible pair $f_2,f_3$ and every $T\ge T_0$ there is a positive integer $n\le T$ with
	\begin{equation}\label{eqn.BiquadConc}
		\|n\alpha_1\|\le Cn^{-1/3},\qquad
		\|n\alpha_2\|\le\frac{Cn^{-1/3}}{f_2(T)},\qquad
		\|n\alpha_3\|\le\frac{Cn^{-1/3}}{f_3(T)}.
	\end{equation}
\end{theorem}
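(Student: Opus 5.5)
The plan is to run the argument described after Theorem~\ref{thm.Main}, but to apply Minkowski's theorem to a box with unequal sides, and to use the Galois structure of $K=\Q(\sqrt a,\sqrt b)$ to make the three error coordinates depend on separate quantities. Let $\tau_a,\tau_b,\tau_{ab}$ be the nontrivial automorphisms of $K$. Here $\tau_a$ fixes $\sqrt a$ and negates $\sqrt b$; $\tau_b$ fixes $\sqrt b$ and negates $\sqrt a$; and $\tau_{ab}$ fixes $\sqrt{ab}$ and negates both $\sqrt a$ and $\sqrt b$. Let $\Lambda=\Z+\alpha_1\Z+\alpha_2\Z+\alpha_3\Z$ and let $\Lambda^*$ be its trace dual. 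For $\beta\in\Lambda^*$ put $n=\Tr(\beta)$ and $p_i=\Tr(\alpha_i\beta)$. Then
\[ n\alpha_i-p_i=\sum_{\tau\ne 1}\tau(\beta)\bigl(\alpha_i-\tau(\alpha_i)\bigr). \]
Each $\alpha_i$ is fixed by exactly one nontrivial automorphism and negated by the other two. Writing $x_\tau=\tau(\beta)$, this gives, up to the fixed factors $2\sqrt{ab},2\sqrt a,2\sqrt b$, the errors
\[ x_{\tau_a}+x_{\tau_b},\qquad x_{\tau_b}+x_{\tau_{ab}},\qquad x_{\tau_a}+x_{\tau_{ab}} \]
for $i=1,2,3$ respectively.

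Take $\beta=\alpha_1^*u$, where $\alpha_1^*$ is the trace-zero dual basis element whose error vector is $e_1$. Its conjugates then satisfy $\tau_b(\alpha_1^*)+\tau_{ab}(\alpha_1^*)=0$ and $\tau_a(\alpha_1^*)+\tau_{ab}(\alpha_1^*)=0$. Hence $\tau_a(\alpha_1^*)=\tau_b(\alpha_1^*)=-\tau_{ab}(\alpha_1^*)=:c\neq 0$. For a unit $u$ of the multiplier ring of $\Lambda$ with conjugates $u_a,u_b,u_{ab}$, the errors of $\alpha_1^*u$ are therefore, up to constants,
\[ c(u_a+u_b),\qquad c(u_b-u_{ab}),\qquad c(u_a-u_{ab}). \]
These are the errors for $\alpha_1,\alpha_2,\alpha_3$. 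I restrict $u$ to a finite-index subgroup of totally positive units, for example squares. Then $u_a,u_b,u_{ab}>0$, and the key requirements become
\[ |\log u_b-\log u_{ab}|\le \varepsilon/f_2(T),\qquad |\log u_a-\log u_{ab}|\le \varepsilon/f_3(T),\qquad 1<u\le c_0T. \]
If these hold, then since $u\,u_au_bu_{ab}=1$, all three conjugates are $\asymp u^{-1/3}$. Using $|e^{\Delta}-1|\le e|\Delta|$ for $|\Delta|\le 1$ (assuming $\varepsilon\le1$, or adjusting constants), the errors for $\alpha_2$ and $\alpha_3$ are $\ll u^{-1/3}/f_2(T)$ and $\ll u^{-1/3}/f_3(T)$, and the error for $\alpha_1$ is $\ll u^{-1/3}$. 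Finally $n=\Tr(\alpha_1^*u)=\alpha_1^*u+O(u^{-1/3})$, so after replacing $u$ by its negative if needed (or choosing $\alpha_1^*>0$), $n\asymp u$. Thus $n\le T$ and $u^{-1/3}\asymp n^{-1/3}$, which gives \eqref{eqn.BiquadConc}.

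To produce such a $u$, I will apply Minkowski's convex body theorem in the rank-$3$ logarithmic unit lattice $L\subset\{y\in\R^4:\sum y_j=0\}$, with coordinates $(\log u,\log u_a,\log u_b,\log u_{ab})$. The three linear forms
\[ \ell_0=\log u,\qquad \ell_2=\log u_b-\log u_{ab},\qquad \ell_3=\log u_a-\log u_{ab} \]
are linearly independent on the trace-zero hyperplane. This is a one-line check: if all three vanish, then $y=(0,t,t,t)$ with $3t=0$. Hence the set
\[ B=\{|\ell_0|\le \log(c_0T),\ |\ell_2|\le\varepsilon/f_2(T),\ |\ell_3|\le\varepsilon/f_3(T)\} \]
is a symmetric parallelepiped with volume $\asymp \varepsilon^2\log T/(f_2(T)f_3(T))=\asymp\varepsilon^2$, by admissibility. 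I fix $\varepsilon$ as an absolute constant depending on $a,b$ so that this volume is at least $8\,\mathrm{covol}(L)$. Minkowski's theorem then gives a nonzero $y\in L\cap B$, and by symmetry I may take $\ell_0(y)\ge 0$. Since $L$ is discrete, a nonzero lattice point cannot have all three coordinates tiny, so $\log u$ is bounded below. Taking $T\ge T_0$ ensures this is compatible with the box. If $\ell_0(y)$ happened to be $0$, then $\ell_2,\ell_3$ would both be small, which is impossible for a nonzero lattice point once $T_0$ is large; so $u>1$.

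The main obstacle, and the place I expect to need care, is the structural step. One must verify that the error vector of $\alpha_1^*u$ really separates as above, with $\alpha_2$ seeing only $u_b-u_{ab}$ and $\alpha_3$ seeing only $u_a-u_{ab}$, so that the two savings are governed by independent forms on $L$. In a general field the errors are fixed linear combinations of all conjugate differences, and a skewed box cannot decouple them; this is exactly why the biquadratic Galois structure with its natural basis is used. Once that identity is checked, the remaining issues (sign and positivity of $n$, the lower bound on $u$, the dependence of $C$ and $T_0$ only on $a,b$) are routine bookkeeping, and they are uniform in the admissible pair because the box volume is independent of $f_2,f_3$.
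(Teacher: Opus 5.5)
\noindent Your decoupling computation is correct and is exactly the paper's route. With $\sigma_1=\tau_b$, $\sigma_2=\tau_a$, $\sigma_3=\tau_{ab}$, your three error expressions are the forms $\Psi_1,\Psi_2,\Psi_3$ of \eqref{eqn.PsiBiquad}, and your box is Lemma~\ref{lem.BalanceBox} with $Y=(y_2,y_3)$.

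The gap is the step $n\asymp u$, i.e.\ the non-vanishing of $n=\Tr(\alpha_1^*u)$, which you treat as routine bookkeeping. Your $\varepsilon$ is not free: the volume condition forces $\varepsilon^2$ to be at least a fixed multiple of $\mathrm{covol}(L)$. So in general $\varepsilon$ is large, and you cannot assume $\varepsilon\le 1$.

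\begin{itemize}
\item For the error bounds a large $\varepsilon$ only costs constants, since $|\ell_i|\le\varepsilon$ gives $|e^{\ell_i}-1|\le e^{\varepsilon}|\ell_i|$.
\item For $n$, however, $n=\alpha_1^*u+O(u^{-1/3})$ gives $n\neq0$ only once $u$ exceeds a threshold of size $e^{O(\varepsilon)}$. Minkowski's theorem gives no such lower bound on $u$; discreteness only gives $\log u\ge\delta_0$ for some unspecified $\delta_0>0$.
\end{itemize}

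This is a genuine obstruction. $\Tr(\alpha_1^*u)$ is the $\sqrt{ab}$-coefficient of $u$, so it vanishes on $\Q(\sqrt b)$. Take a unit $u>1$ of $\Q(\sqrt b)$ lying in your subgroup of totally positive units of $\Zl$ (some power of a fundamental unit does). It has $u_b=u$ and $u_a=u_{ab}=u^{-1}$, hence $\ell_3=0$ and $\ell_2=2\log u$. When $f_2(T)=1$ and $f_3(T)=\log T$, this point lies in your box for every large $T$ as soon as $2\log u\le\varepsilon$. Nothing in your argument prevents it from being the lattice point Minkowski's theorem supplies, and then $n=0$.

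Relatedly, your reason why $\ell_0(y)=0$ is impossible fails when $f_2\equiv1$, since $\ell_2$ is then not small. The correct reason is simply that $\log|u|=0$ forces $u=\pm1$.

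The missing idea is to split the volume unequally between the two thin directions.
\begin{itemize}
\item Let $i_1$ be the index with the larger $f_i(T)$, so that $f_{i_1}(T)\ge(\log T)^{1/2}$, and let $i_2$ be the other.
\item Give the $i_2$-side a fixed small constant divided by $f_{i_2}(T)$.
\item Put all of the volume slack on the $i_1$-side. That side is then a bounded constant divided by $f_{i_1}(T)$, so it tends to $0$.
\end{itemize}
In the paper this is $\eta_i=\mu_i/f_i(T)$ with $\mu_{i_2}=\tfrac18$ and $\mu_{i_1}=8J'$.

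With this choice both $|\ell_2|$ and $|\ell_3|$ are small. The $\alpha_{i_2}$-error is at most $2\alpha_1^*\alpha_{i_2}<\tfrac12$ (using $u>1$), and the $\alpha_{i_1}$-error is less than $1$ for $T\ge T_0$. Now suppose $n=0$. Then the integers $p_2,p_3$ equal minus these errors, so they vanish. Hence $\alpha_1^*u=p_1\alpha_1^*$, and $u=p_1$ is a rational integer unit, contradicting $u>1$.

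Once $n\ge1$, the bound $n\ll u$ follows from your asymptotic for $n$. The rest of your argument then goes through, uniformly in the admissible pair, since $\mu_{i_1}\le 16J$.
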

We do not assume that $a$ and $b$ are squarefree; the multiplier ring of Section \ref{sec.Background} takes care of this. As an example, taking $a=2$, $b=5$, $f_2(T)=\log T$ and $f_3\equiv 1$ gives the following.
\begin{corollary}\label{cor.TwoFive}
	There are constants $C$ and $T_0$ such that, for every $T\ge T_0$, there is a positive integer $n\le T$ with
	\[\|n\sqrt{10}\|\le Cn^{-1/3},\qquad \|n\sqrt 2\|\le\frac{Cn^{-1/3}}{\log T},\qquad \|n\sqrt 5\|\le Cn^{-1/3}.\]
\end{corollary}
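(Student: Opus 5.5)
The corollary is a direct specialization of Theorem \ref{thm.Biquad}, so the plan is to check its hypotheses for one concrete choice of data and then read off the conclusion \eqref{eqn.BiquadConc}. No new approximation argument is needed.

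\emph{Choice of field.} First I take $a=2$ and $b=5$. None of $a=2$, $b=5$, $ab=10$ is a perfect square. Hence Theorem \ref{thm.Biquad} applies to the basis $1,\alpha_1,\alpha_2,\alpha_3$ with
\[\alpha_1=\sqrt{10},\qquad \alpha_2=\sqrt 2,\qquad \alpha_3=\sqrt 5\]
of $\Q(\sqrt2,\sqrt5)$. It supplies constants $C$ and $T_0$ depending only on $a$ and $b$, hence absolute here.

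\emph{Choice of admissible pair.} Next I set $f_2(T)=\log T$ and $f_3(T)\equiv 1$ for $T\ge 3$. The product condition holds trivially: $f_2(T)f_3(T)=\log T$. The lower bounds also hold, since $f_3\equiv1$ and $f_2(T)=\log T\ge\log 3>1$ for $T\ge3$. So $(f_2,f_3)$ is admissible.

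\emph{Conclusion.} Substituting into \eqref{eqn.BiquadConc}, for every $T\ge T_0$ (we may assume $T_0\ge3$) there is a positive integer $n\le T$ with
\[\|n\sqrt{10}\|\le Cn^{-1/3},\qquad \|n\sqrt2\|\le \frac{Cn^{-1/3}}{\log T},\qquad \|n\sqrt5\|\le Cn^{-1/3}.\]
This is exactly the statement of the corollary.

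There is no real obstacle here. The only points that need care are the admissibility check $\log T\ge1$, which needs $T\ge3$ and is guaranteed by $T\ge T_0$ after enlarging $T_0$, and matching which coordinate carries the savings. In the theorem the savings go on $\alpha_2=\sqrt a=\sqrt2$, and $\alpha_1=\sqrt{ab}=\sqrt{10}$ carries none.
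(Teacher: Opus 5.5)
Your proposal is correct and is exactly the paper's argument: the corollary is obtained by specializing Theorem \ref{thm.Biquad} to $a=2$, $b=5$ with the admissible pair $f_2(T)=\log T$, $f_3\equiv 1$, and your matching of $\sqrt{10},\sqrt2,\sqrt5$ to $\alpha_1,\alpha_2,\alpha_3$ is right. One small clarification: admissibility is a property of the functions on $T\ge 3$, where $\log T\ge\log 3>1$ holds automatically, so checking it never requires enlarging $T_0$.
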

For an arbitrary number field we can prove Peck's conjecture provided that the $f_i$ are not too far apart.
\begin{theorem}\label{thm.Weighted}
	Let $d\ge 2$ and suppose that $1,\alpha_1,\ldots ,\alpha_d$ is a basis for a real algebraic number field. For every $c>0$, there are constants $C$ and $T_0$, depending only on $\alpha_1,\ldots ,\alpha_d$, and $c$, such that, for all admissible $f_2,\ldots ,f_d$ and all $T\ge T_0$ satisfying
	\begin{equation}\label{eqn.WeightCond}
		\max_{2\le i\le d}f_i(T)\ \le\ c\min_{2\le i\le d}f_i(T)^2,
	\end{equation}
	there is a positive integer $n\le T$ with
	\begin{equation}\label{eqn.Weighted}
		\|n\alpha_1\|\le Cn^{-1/d},\qquad
		\|n\alpha_i\|\le\frac{Cn^{-1/d}}{f_i(T)}\quad (2\le i\le d).
	\end{equation}
\end{theorem}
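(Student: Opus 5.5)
The plan is to run the argument behind Theorem \ref{thm.Main} with a box in place of a cube. We take $\beta=\alpha_1^*u$ with $u$ a unit of the multiplier ring of $\Lambda$, and set $n=|\Tr(\beta)|$ and $p_i=\pm\Tr(\alpha_i\beta)$. Let $\sigma_0$ be the identity embedding and $\sigma_1,\ldots,\sigma_d$ the others. Then
\[
n\alpha_i-p_i=\pm\sum_{j=1}^{d}\sigma_j(\beta)\bigl(\alpha_i-\sigma_j(\alpha_i)\bigr).
\]
We write $\sigma_j(u)=\lambda(1+\varepsilon_j)$ with $\lambda=u^{-1/d}$. Since $\Tr(\alpha_1^*)=0$ and $\Tr(\alpha_1^*\alpha_i)=\delta_{1i}$, the $\varepsilon$-independent part of the error vector is $\mp\lambda e_1$. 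The error vector is therefore
\[
\lambda\Bigl(\mp e_1+\mathcal{L}(\varepsilon)+O(|\varepsilon|^2)\Bigr),
\]
where $\mathcal{L}$ is a fixed real-linear map on the $d$-dimensional real space of conjugation-compatible $\varepsilon$. The map $\mathcal{L}$ is injective, because the vectors $(\alpha_i-\sigma_j(\alpha_i))_i$ are independent, as $1,\alpha_1,\ldots,\alpha_d$ is a basis. Since $n\asymp u$ and $\lambda\asymp n^{-1/d}$, it suffices to find a unit $u$ with $1\ll u\le T$ such that $|\mathcal{L}_1(\varepsilon)|\ll1$ and $|\mathcal{L}_i(\varepsilon)|\le c_0/f_i(T)$ for $2\le i\le d$, with the quadratic error under control.

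For Minkowski's theorem we encode units as follows. Send $u$ to the vector with entries $\log u$, the numbers $\log|\sigma_j(u)|+\frac1d\log u$, and the arguments $\arg\sigma_j(u)$ at the non-real embeddings, taken modulo $2\pi$. The norm relation leaves $d$ real degrees of freedom ($r+z$ from the unit lattice and $z$ from the torus, and $r+2z=d$). Lifting the arguments and adjoining $2\pi\Z^z$ gives a lattice $M\subset\R^d$ of fixed covolume. To first order, the coordinates other than $\log u$ are the real and imaginary parts of $\varepsilon$, subject to one linear constraint coming from $\sum_j\log|\sigma_j(u)|=-\log u$. Restricted to that constraint hyperplane, the map $\varepsilon\mapsto(\mathcal{L}_2,\ldots,\mathcal{L}_d)$ should be invertible. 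The direction it discards is, to first order, a uniform rescaling of all conjugates, and that only moves the error along $e_1$. Checking this invertibility carefully is one point that needs care.

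The convex body is the symmetric set
\[
|\log u|\le K\log T,\qquad |\mathcal{L}_i(\varepsilon)|\le \frac{\delta_0}{f_i(T)}\quad(2\le i\le d),
\]
with a small fixed constant $\delta_0$. Its volume is $\asymp K\log T\prod_{i\ge2}f_i(T)^{-1}=K$, by admissibility. Choosing $K$ large in terms of the covolume gives a nonzero lattice point. Replacing $u$ by $u^{-1}$ if necessary we get $\log u\ge0$. If $u=1$ then all the other coordinates vanish, so a nonzero point must have $u\ne1$. Discreteness of $M$ near $0$ then bounds $\log u$ below, once $T\ge T_0$ forces $\delta_0/f_i$ small. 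We get $n\le T^K\cdot O(1)$, so we apply the argument with $T^{1/K}$ in place of $T$; this only changes $\log T$ by a constant factor absorbed into $C$. One must also handle the modulo-$2\pi$ lift and the small units, which only affects the finitely many cases covered by $T_0$.

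The main obstacle is the nonlinearity, namely passing from the log/argument coordinates to $\varepsilon$ and controlling the $O(|\varepsilon|^2)$ term. The body only forces $|\varepsilon|\ll\max_i 1/f_i=1/\min_i f_i$. The quadratic error in every coordinate is therefore $O(1/\min_i f_i^2)$, and it must be at most $C/f_i(T)$ for each $i$, that is $\max_i f_i\ll\min_i f_i^2$. This is exactly hypothesis \eqref{eqn.WeightCond}, and the constant $c$ there feeds into $C$. I would first verify that the linearization from log and argument coordinates to $\varepsilon$ is exact up to this $O(|\varepsilon|^2)$, uniformly for $|\varepsilon|\le\delta_0$. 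I would then check that the first-coordinate error stays $O(1)$, which gives $\|n\alpha_1\|\le Cn^{-1/d}$.
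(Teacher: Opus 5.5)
Your route is the paper's: you multiply $\alpha_1^*$ by units, invert $(\Psi_2,\ldots,\Psi_d)$ on $H$ because its kernel is $\R\bm 1\not\subset H$, apply Minkowski to a box with sides proportional to $1/f_i(T)$, and use \eqref{eqn.WeightCond} to absorb the quadratic term. The error is in fact exactly $u^{-1/d}(\delta_{i1}+\Psi_i(\bm\epsilon))$, so the only nonlinearity is $\bm\epsilon=e^{\bm\zeta}-1$. Three steps do not work as written.

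First, the body must be cut out by $|\Psi_i(\bm\zeta)|$, which is linear in the lattice coordinates. The set $|\mathcal L_i(\varepsilon)|\le\cdot$ is neither convex nor symmetric in those coordinates.

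Second, your coordinates record only $|\sigma_j(u)|$ at the real places. The Minkowski point can therefore have $\sigma_j(u)<0$ for some $1\le j\le r$, which gives $\varepsilon_j\approx-2$. The paper fixes this by finding $v$ with $|\log v|\le\frac12\log T$ and taking $u=v^2$. This $u$ is totally positive, has $\bm\zeta(u)$ equal to twice the coordinates of $v$, and has arguments exactly $2\theta_j$.

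Third, a small $\delta_0$ is incompatible with $u\le T$, and replacing $T$ by $T^{1/K}$ is not legitimate. The $f_i$ are arbitrary functions, so rerunning the argument at height $T^{1/K}$ gives bounds in terms of $f_i(T^{1/K})$. Moreover, \eqref{eqn.WeightCond} is assumed only at $T$. If you keep $f_i(T)$ and $|\log u|\le\frac12\log(T/C')$, Minkowski's volume condition needs sides $\mu/f_i(T)$. Here $\mu$ is a fixed constant set by the covolume, and you cannot take it small.

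So the smallness of $\bm\zeta$ you rely on cannot come from $\delta_0$. You need it for the linearization, for excluding the lifts $\{\bze\}\times2\pi\Z^z$, and for getting $u$ large (or $q\ne0$). It must come instead from $f_{\mr{min}}(T)\to\infty$. This follows from admissibility and \eqref{eqn.WeightCond}: $f_{\mr{max}}\ge(\log T)^{1/(d-1)}$, hence $f_{\mr{min}}\ge(f_{\mr{max}}/c)^{1/2}\ge c^{-1/2}(\log T)^{1/(2(d-1))}$. The hypothesis is therefore used twice, not only for the quadratic term. With these repairs your argument coincides with the paper's.
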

Taking all of the $f_i$ equal recovers Theorem \ref{thm.Main} for $T\ge T_0$, and \eqref{eqn.WeightCond} is satisfied for all large $T$ whenever the $f_i$ are powers of $\log T$ whose exponents differ multiplicatively by a factor less than two. Condition \eqref{eqn.WeightCond} is used in our proof to control the second order term in a linearization argument, and we do not know in general how to remove it.

\noindent\textit{Note in press:} Upon making this manuscript public, we were informed that Bugeaud and de Mathan have very recently proved a version of Theorem \ref{thm.Weighted} (see \cite{BugeDeMath2026}). Interestingly, they also require the same condition \eqref{eqn.WeightCond} as us. This may be taken as independent confirmation that the condition is a natural limitation of the machinery used in our proofs.

We have stated these results with $f_i(T)$ in place of $f_i(q_0)$, and our proofs make no use of any regularity of the $f_i$. Peck does not assume that the $f_i$ are monotone, although that may be the intended reading, and for non-decreasing $f_i$ we have that $f_i(T)\ge f_i(n)$, so \eqref{eqn.BiquadConc} and \eqref{eqn.Weighted} imply the inequalities of Peck's conjecture. However for transparency we mention that, without monotonicity, the passage from $f_i(T)$ to $f_i(n)$ is not available, and the conjecture read literally for arbitrary $f_i$ would be a stronger statement than the one proved here.

We have not found Peck's conjecture treated anywhere in the literature since 1961. Note that the reweighting leaves the product of the bounds in \eqref{eqn.Weighted} unchanged, since $f_2(T)\cdots f_d(T)=\log T$, so results about the product $n\|n\alpha_1\|\cdots\|n\alpha_d\|$ alone cannot distinguish between different choices of the $f_i$, and that the theorem of de Mathan quoted above, being a lower bound for $\max_{i\ge 2}\|n\alpha_i\|$, constrains only the smallest of the $f_i$.

\noindent\textbf{Notation:} We use $O$ and $\ll$ to denote the standard big-oh and
Vinogradov notations, with implied constants depending only on $\alpha_1,\ldots ,\alpha_d$. For $\bm{x}\in\R^d$ we write
$|\bm{x}|=\max_i|x_i|$ for the sup norm and $\|\bm{x}\|=\min_{\bp\in\Z^d}|\bm{x}-\bp|$
for the distance from $\bm{x}$ to the nearest integer point, so that
$\|\bm{x}\|=\max_i\|x_i\|$. For a prime $p$ and $n\in\Z\setminus\{0\}$, $|n|_p$ denotes the $p$-adic absolute value of $n$. The matrix transpose is denoted by $t$, and elements of $\R^{d+1}$ and
$\R^d$ are column vectors. We index the coordinates of $\R^{d+1}$ by $0,\ldots ,d$ and those of $\R^d$ by $1,\ldots ,d$, and in either space $e_i$ denotes the standard basis vector whose $i$th coordinate is $1$; thus $e_0=(1,0,\ldots ,0)^t\in\R^{d+1}$ while $e_1=(1,0,\ldots ,0)^t\in\R^d$, and the vector $(0,e_1)^t\in\R^{d+1}$ of \eqref{eqn.StartPoint} below is the same as $e_1$ when read in $\R^{d+1}$. Throughout, $i$ indexes the coordinates $1,\ldots ,d$ of an approximation, $k$ indexes the embeddings of $K$, and $j$ indexes the pairs of complex conjugate embeddings, always in the combinations $r+j$ and $r+z+j$.

\section{Background from algebraic number theory}\label{sec.Background}

Suppose that $K\subseteq\R$ is an algebraic number field of degree $[K:\Q]=d+1$, with $r+1$ real and $2z$ non-real embeddings into $\C$, so that $d+1=(r+1)+2z$. Label the embeddings as $\sigma_0,\ldots,\sigma_d$ so that
$\sigma_0,\ldots,\sigma_r$ are the real ones, $\sigma_0$ is the identity on $K$, and
$\sigma_{r+j}=\overline{\sigma_{r+z+j}}$ for $1\le j\le z$. Identifying $\C$ with
$\R^2$, the \textit{Minkowski embedding} is the injective map $\sigma:K\rar\R^{d+1}$ defined by
\[
  \sigma(\beta)=\bigl(\sigma_0(\beta),\ldots,\sigma_{r+z}(\beta)\bigr)^{t}.
\]
The \textit{logarithmic embedding} is the homomorphism
$\varphi:K^{\times}\rar\R^{r+z+1}$,
\[
  \varphi(\beta)=\bigl(\log|\sigma_0(\beta)|,\ldots,\log|\sigma_{r+z}(\beta)|\bigr)^{t}.
\]
We write $\Ok$ for the ring of integers of $K$ and $\Ok^{\times}$ for its group of units, and $\Nm$ and $\Tr$ for the norm and the trace from $K$ to $\Q$. We say that a unit $u$ is \textit{totally positive} if $\sigma_k(u)>0$ for $0\le k\le r$.

An additive subgroup of $\R^{d+1}$ is a lattice if it is discrete and cocompact. By extension, we will say that a subset $\Lambda\subseteq K$ is a lattice in $K$ if $\sigma(\Lambda)$ is one. It is a basic fact of algebraic number theory that, if $\alpha_0,\ldots,\alpha_d\in K$ are
$\Q$-linearly independent, then the $\Z$-module
$\alpha_0\Z+\cdots+\alpha_d\Z$ is a lattice in $K$. Another important fact, Dirichlet's unit theorem, guarantees that
$\varphi(\Ok^{\times})$ is a lattice (of rank $r+z$) in the hyperplane of $\R^{r+z+1}$ defined by
\begin{equation}\label{eqn.Hyperplane}
	x_0+\cdots+x_r+2x_{r+1}+\cdots+2x_{r+z}=0 .
\end{equation}

Throughout this paper we fix $\alpha_0=1$ and we suppose that
$\alpha_0,\alpha_1,\ldots,\alpha_d$ is a basis for $K$ over $\Q$. Write
$\balph=(\alpha_1,\ldots,\alpha_d)^{t}$, and
\[
  \Lambda=\Z+\alpha_1\Z+\cdots+\alpha_d\Z.
\]
It is not difficult to show that there are unique $\Q$-linearly independent numbers $\alpha_0^*,\ldots ,\alpha_d^*\in K$ satisfying the equations
\begin{equation}\label{eqn.DualBasis}
\Tr(\alpha_i\alpha_j^*)=\delta_{ij},\quad\text{for} ~ 0\le i,j\le d.
\end{equation}
We refer to the $\Z$-module generated by these numbers as the \textit{dual lattice} to $\Lambda$, which we denote by
\[
  \Lambda^{*}=\alpha_0^{*}\Z+\cdots+\alpha_d^{*}\Z.
\]

Our first key observation, which is implicit in Peck's original proof, is that the dual lattice $\Lambda^*$ naturally encodes information about simultaneous approximations to $\balph$. To make this precise, write $\Sigma^{*}$ for the real
$(d+1)\times(d+1)$ matrix with columns $\sigma(\alpha_0^{*}),\ldots,\sigma(\alpha_d^{*})$, and for $\bm{\beta}\in\R^{d}$ let
\[
u_{\bm{\beta}}=\begin{pmatrix}1&\bze^{t}\\ \bm{\beta}&I_d\end{pmatrix}
\ \in\ \mr{SL}_{d+1}(\R).
\]
A simple computation (cf. \cite[Lemma 19]{DhanHayn2025}), using only the defining property \eqref{eqn.DualBasis}, gives the following result.
\begin{lemma}\label{lem.Frame}
	With $\balph$ as above, the matrix 
	\[\mc{R}=u_{-\balph}(\Sigma^*)^{-1}\]
	has first column $e_0=(1,0,\ldots,0)^{t}$.
\end{lemma}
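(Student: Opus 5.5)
The first column of $\mathcal R=u_{-\balph}(\Sigma^*)^{-1}$ is $\mathcal R e_0$, so it suffices to show that $\mathcal R e_0=e_0$. Equivalently, I will show that $(\Sigma^*)^{-1}e_0=u_{\balph}e_0$, since $u_{-\balph}^{-1}=u_{\balph}$. Now $u_{\balph}e_0$ is the first column of $u_{\balph}$, namely $(1,\alpha_1,\ldots,\alpha_d)^t=(\alpha_0,\ldots,\alpha_d)^t$. So the claim reduces to the identity
\[
\Sigma^*\,(\alpha_0,\alpha_1,\ldots,\alpha_d)^t=e_0 ,
\]
that is, $\sum_{j=0}^d\alpha_j\,\sigma(\alpha_j^*)=e_0$.

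The key step is to prove this identity using only the defining property \eqref{eqn.DualBasis}. First I will show that for every $\beta\in K$ we have
\[
\beta=\sum_{j}\Tr(\beta\alpha_j^*)\,\alpha_j .
\]
Indeed, write $\beta=\sum_j c_j\alpha_j$ with $c_j\in\Q$. Multiplying by $\alpha_i^*$, taking traces and applying \eqref{eqn.DualBasis} gives $c_i=\Tr(\beta\alpha_i^*)$.

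Next I apply $\sigma_k$ to this expansion for an arbitrary embedding $\sigma_k$. With $\beta$ fixed, I will pick elements that isolate coordinates. A cleaner route is to take the expansion with the roles reversed. By the same argument with the dual basis,
\[
\gamma=\sum_j\Tr(\gamma\alpha_j)\,\alpha_j^* \quad\text{for all }\gamma\in K .
\]
Applying $\sigma_k$ gives $\sigma_k(\gamma)=\sum_j\Tr(\gamma\alpha_j)\sigma_k(\alpha_j^*)$. Setting $\gamma$ to range over a $\Q$-basis and extending $\R$-linearly, the identity says that for $x=\sigma(\gamma)$, the vector of traces equals $(\Sigma^*)^{-1}x$. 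In other words, the $j$th row of $(\Sigma^*)^{-1}$ is the linear functional $\sigma(\gamma)\mapsto\Tr(\gamma\alpha_j)$. This functional is $\R$-linear in $\sigma(\gamma)$, since $\Tr(\gamma\alpha_j)=\sum_k\sigma_k(\alpha_j)\sigma_k(\gamma)$, with each complex pair contributing $2\,\mathrm{Re}$.

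Now I evaluate $(\Sigma^*)^{-1}e_0$. I must express the functional $x\mapsto\sum_k\sigma_k(\alpha_j)x_k$ (with the real-coordinate convention for complex places) at $x=e_0$. Only the $\sigma_0$ coordinate is nonzero, so the value is $\sigma_0(\alpha_j)=\alpha_j$, because $\sigma_0$ is the identity on $K\subseteq\R$. Hence $(\Sigma^*)^{-1}e_0=(\alpha_0,\ldots,\alpha_d)^t$, as required.

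The only care needed, and the main obstacle, is in the complex coordinates. With the identification $\C\cong\R^2$, the trace functional is written through real and imaginary parts with factors of $2$, and I must check that the $\sigma_0$-coefficient is exactly $\alpha_j$ with no extra factor. This holds because $\sigma_0$ is a real place. Verifying this bookkeeping is routine, and the result follows.
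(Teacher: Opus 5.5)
Your proof is correct and is essentially the computation the paper has in mind: $(\Sigma^*)^{-1}$ sends $\sigma(\gamma)$ to the dual-basis coordinate vector $(\Tr(\alpha_j\gamma))_j$, and each trace functional extends linearly to $\R^{d+1}$ with coefficient $\sigma_0(\alpha_j)=\alpha_j$ on the real $\sigma_0$-coordinate, so $(\Sigma^*)^{-1}e_0=(1,\balph)^t$ and $u_{-\balph}$ maps this to $e_0$ (this is the same ``$k=0$ terms cancel'' observation used in the proof of Lemma~\ref{lem.ErrorMap}). The first expansion $\beta=\sum_j\Tr(\beta\alpha_j^*)\alpha_j$ that you prove is never used and can simply be deleted.
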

We will call the matrix $\mc{R}$ of Lemma \ref{lem.Frame} the \textit{rectifying frame} of $\balph$. Note that for $s=q\alpha_0^{*}+p_1\alpha_1^{*}+\cdots+p_d\alpha_d^{*}\in\Lambda^*$, we have that
\begin{equation}\label{eqn.FrameAction}
	\mc{R}\sigma(s)=(q,\ \bp-q\balph)^{t},
\end{equation}
since $(\Sigma^*)^{-1}\sigma(s)=(q,\bp)^t$ is the coordinate vector of $s$ with respect to the dual basis. In particular, taking $s=\alpha_1^*$,
\begin{equation}\label{eqn.StartPoint}
	\mc{R}\sigma(\alpha_1^*)=(0,e_1)^{t}.
\end{equation}
Lemma \ref{lem.Frame} tells us that $\mc{R}$ can be written in block form as
\begin{equation}\label{eqn.BlockForm}
	\mc{R}=\begin{pmatrix}1&\bc^{t}\\ \bze&\tilde{\mc{R}}\end{pmatrix},
\end{equation}
with $\bc\in\R^d$ and $\tilde{\mc{R}}$ a $d\times d$ matrix. In other words, the vector $\bp-q\balph$ of approximation errors in \eqref{eqn.FrameAction} is a fixed linear function of the last $d$ coordinates of $\sigma(s)$ (i.e. of the conjugates $\sigma_1(s),\ldots ,\sigma_d(s)$), and does not depend on $s=\sigma_0(s)$ itself.

We will produce elements of $\Lambda^*$ by multiplying $\alpha_1^*$ by units. For $u\in K$ let $M_u$ denote the real $(d+1)\times(d+1)$ matrix which represents multiplication by $u$ in Minkowski coordinates, so that $\sigma(\beta u)=M_u\sigma(\beta)$ for all $\beta\in K$. The matrix $M_u$ is block diagonal, with $1\times 1$ blocks $\sigma_k(u)$ for $0\le k\le r$, followed by the $2\times 2$ blocks
\[
	\begin{pmatrix}\mr{Re}\,\sigma_{r+j}(u)&-\mr{Im}\,\sigma_{r+j}(u)\\ \mr{Im}\,\sigma_{r+j}(u)&\mr{Re}\,\sigma_{r+j}(u)\end{pmatrix},\quad 1\le j\le z,
\]
representing multiplication by $\sigma_{r+j}(u)$ on $\C\cong\R^2$.

In order to identify a full rank subgroup of $\Ok^\times$ which acts on $\Lambda$ and $\Lambda^*$, we first define the \textit{multiplier ring} $\Zl$ of $\Lambda$ by
\[
  \Zl=\{\gamma\in\Ok:\ \gamma\Lambda\subseteq\Lambda\}.
\]
It is not difficult to verify the following basic lemma (see \cite{Peck1961} and \cite[Lemma 13]{DhanHayn2025}).
\begin{lemma}\label{lem.Multiplier}
Let $\Lambda\subseteq K$ be a lattice. Then $\gamma\Lambda^{*}\subseteq\Lambda^{*}$ for
every $\gamma\in\Zl$, and $u\Lambda^{*}=\Lambda^{*}$ for every $u\in\Zl^{\times}$.
Furthermore $\Zl^{\times}$ is a subgroup of $\Ok^{\times}$ of maximal rank $r+z$, so
that $\varphi(\Zl^{\times})$ is a lattice in the hyperplane \eqref{eqn.Hyperplane}.
\end{lemma}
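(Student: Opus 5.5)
The plan is to prove the three assertions of Lemma \ref{lem.Multiplier} in order: stability of $\Lambda^*$ under $\Zl$, invertibility for units, and maximal rank of $\Zl^\times$. All three rest on the trace duality \eqref{eqn.DualBasis} and the fact that $\Zl$ is an order of finite index in $\Ok$.

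\textbf{Step 1: $\gamma\Lambda^*\subseteq\Lambda^*$ for $\gamma\in\Zl$.} By \eqref{eqn.DualBasis}, $\Lambda^*$ is exactly
\[
\{\beta\in K:\ \Tr(\beta\lambda)\in\Z \text{ for all } \lambda\in\Lambda\}.
\]
Indeed, writing $\beta=\sum_j c_j\alpha_j^*$ with $c_j\in\Q$, we get $\Tr(\beta\alpha_i)=c_i$, so the trace condition on the basis $\alpha_0,\ldots,\alpha_d$ of $\Lambda$ forces every $c_i\in\Z$. Now take $\gamma\in\Zl$, $s\in\Lambda^*$ and $\lambda\in\Lambda$. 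Then
\[
\Tr(\gamma s\lambda)=\Tr\bigl(s(\gamma\lambda)\bigr)\in\Z,
\]
because $\gamma\lambda\in\Lambda$. Hence $\gamma s\in\Lambda^*$.

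\textbf{Step 2: $u\Lambda^*=\Lambda^*$ for $u\in\Zl^\times$.} Here $\Zl^\times$ means the units of the ring $\Zl$, so $u^{-1}\in\Zl$ as well. Applying Step 1 to both $u$ and $u^{-1}$ gives $u\Lambda^*\subseteq\Lambda^*$ and $u^{-1}\Lambda^*\subseteq\Lambda^*$, and together these give equality. We also have $\Zl^\times\subseteq\Ok^\times$, since $\Zl\subseteq\Ok$ by definition.

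\textbf{Step 3: $\Zl$ has finite index in $\Ok$.} Choose a positive integer $N$ with $N\alpha_i\in\Ok$ for all $i$. Then
\[
N\Ok\Lambda=\sum_i \Ok\,(N\alpha_i)\subseteq\Ok.
\]
Since $\Lambda$ and $\Ok$ are both lattices of full rank in $K$, there is also a positive integer $M$ with $M\Ok\subseteq\Lambda$. It follows that for $\gamma\in\Ok$,
\[
MN\gamma\Lambda\subseteq M\Ok\subseteq\Lambda,
\]
so $MN\Ok\subseteq\Zl$. Thus $\Zl$ is a subring of $\Ok$ containing $1$ and of finite index. The rank conclusion would follow from Dirichlet's unit theorem for orders, but to keep the argument self-contained I would deduce it directly.

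\textbf{Step 4: $\Zl^\times$ has finite index in $\Ok^\times$.} Put $L=MN$. The quotient ring $\Ok/L\Ok$ is finite, and its unit group $(\Ok/L\Ok)^\times$ is finite. Let $H$ be the kernel of $\Ok^\times\to(\Ok/L\Ok)^\times$; it has finite index in $\Ok^\times$. Take $u\in H$, so $u=1+Lx$ with $x\in\Ok$. Then $u\in 1+L\Ok\subseteq\Zl$, and likewise $u^{-1}\in H\subseteq\Zl$. Hence $u\in\Zl^\times$, so $H\subseteq\Zl^\times\subseteq\Ok^\times$ and $\Zl^\times$ has finite index in $\Ok^\times$.

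\textbf{Step 5: the rank statement.} By Dirichlet's unit theorem as quoted earlier, $\varphi(\Ok^\times)$ is a lattice of rank $r+z$ in the hyperplane \eqref{eqn.Hyperplane}. A finite-index subgroup of a lattice is again a lattice of the same rank, so $\varphi(\Zl^\times)$ is a lattice in that hyperplane. The kernel of $\varphi$ consists of roots of unity, which is finite and harmless here.

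The only step requiring care is Step 4, specifically checking that the congruence subgroup $H$ sits inside $\Zl$ together with its inverses. This is handled by the inclusion $1+L\Ok\subseteq\Zl$, which uses $1\in\Zl$ and $L\Ok\subseteq\Zl$.
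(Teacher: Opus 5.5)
Your proof is correct. The paper does not prove this lemma itself; it says the lemma is easy to verify and defers to Peck and to Lemma~13 of Dhanda--Haynes, so there is no in-paper argument to compare against.

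What you give is the standard self-contained route. The trace-dual description $\Lambda^*=\{\beta\in K:\Tr(\beta\lambda)\in\Z \text{ for all } \lambda\in\Lambda\}$ settles the first two claims in a line each. For the rank claim, the inclusion $MN\Ok\subseteq\Zl$ places the congruence subgroup $\ker\bigl(\Ok^\times\to(\Ok/L\Ok)^\times\bigr)$ inside $\Zl^\times$. This reduces everything to Dirichlet's theorem for $\Ok$, so you never need the unit theorem for non-maximal orders.

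It is essentially the same congruence-subgroup device the paper uses later for $U_m$ in Section~\ref{sec.PadicProof}. It also gives an explicit bound $[\Ok^\times:\Zl^\times]\le|(\Ok/L\Ok)^\times|$.
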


The following lemma, which is where Minkowski's theorem is used, is the main gear that makes the proofs of Theorems \ref{thm.Main} and \ref{thm.Padic} work. It shows that units $u>1$ whose conjugates are all close to $u^{-1/d}$ can be found below every sufficiently large bound.

\begin{lemma}\label{lem.Balance}
Let $d\ge 2$. There is a constant $T_1=T_1(\Lambda)$ such that, for all $T\ge T_1$, there exists $u\in\Zl^\times$ with $1<u\le T$ and
\begin{equation}\label{eqn.Balanced}
	\bigl|u^{1/d}\sigma_k(u)-1\bigr|\ \ll\ (\log T)^{-1/(d-1)}\quad (1\le k\le d).
\end{equation}
\end{lemma}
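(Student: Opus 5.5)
The idea is to apply Minkowski's convex body theorem to a rank-$d$ lattice that records, for each unit $u$, three things: the size $\log u$, how far each $\log|\sigma_k(u)|$ is from $-\frac1d\log u$, and the arguments of the non-real conjugates. To avoid signs and torsion, we pass to the subgroup $G\subseteq\Zl^\times$ of totally positive units. It has finite index, so by Lemma \ref{lem.Multiplier} it still has rank $r+z$. Since $K\subseteq\R$, its only roots of unity are $\pm1$, so $G$ is torsion-free. Fix a basis $u_1,\dots,u_{r+z}$ of $G$.

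For $u\in G$ define the real deviations
\[
x_k(u)=\log|\sigma_k(u)|+\tfrac1d\log u\qquad(1\le k\le r+z).
\]
Because $\sum_{k\le r}\log|\sigma_k u|+2\sum_j\log|\sigma_{r+j}u|=-\log u$ by \eqref{eqn.Hyperplane}, these satisfy the single linear relation $\sum_{k\le r}x_k+2\sum_j x_{r+j}=0$. Hence $(\log u,x_1,\dots,x_{r+z})$ is an injective linear image of $\varphi(u)$ lying in an $(r+z)$-dimensional space $W\cong\R^{r+z}$.

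Next, choose real lifts $\theta_j(u_i)$ of $\arg\sigma_{r+j}(u_i)$ for the basis units. Let $L\subseteq W\times\R^z\cong\R^d$ be the group generated by the $d$ vectors
\[
\bigl(\log u_i,\ x(u_i),\ \theta(u_i)\bigr)\quad (1\le i\le r+z),\qquad (0,0,2\pi e_j)\quad(1\le j\le z).
\]
The generating matrix is block triangular with invertible diagonal blocks: the regulator block from $\varphi(G)$ and $2\pi I_z$. So $L$ is a lattice of full rank $d=r+2z$ with some fixed covolume $V$. Every point of $L$ has the form $(\log u,x(u),\theta)$ for some $u=\prod u_i^{a_i}\in G$, with $\theta_j\equiv\arg\sigma_{r+j}(u)\pmod{2\pi}$.

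Now take the symmetric convex box
\[
B=\{|y_0|\le\log T,\ |x_k|\le\varepsilon,\ |\theta_j|\le\varepsilon\}.
\]
Since $d-1=(r+z-1)+z$, the box has $d-1$ small directions once we parametrize $W$ by $\log u$ and $r+z-1$ of the $x_k$. Its volume is therefore $\asymp\varepsilon^{d-1}\log T$. Choosing $\varepsilon=A(\log T)^{-1/(d-1)}$ with $A$ large in terms of $V$ makes the volume at least $2^dV$. Minkowski's theorem then gives a nonzero point of $L\cap B$, corresponding to some $u\in G$.

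The step I expect to need the most care is ruling out degenerate lattice points; this is where $T_1$ enters. Suppose the point has $\log u=0$. Then $\varphi(u)$ has all coordinates $\ll\varepsilon$. Since $\varphi(G)$ is discrete, for $T\ge T_1$ this forces $\varphi(u)=0$, so $u=1$ because $G$ is torsion-free. Then the $\theta$-part is a nonzero vector in $2\pi\Z^z$, which contradicts $|\theta_j|\le\varepsilon<2\pi$. So $\log u\neq0$, and by the symmetry of $B$ we may replace $u$ by $u^{-1}$ to get $1<u\le T$.

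Finally, for each $k$ we have
\[
u^{1/d}\sigma_k(u)=\exp\bigl(x_k+\iu\theta\bigr),
\]
where $\theta=0$ for real $k$ (using total positivity) and $\theta=\pm\theta_j$ for the complex pairs. All exponents are $O(\varepsilon)$, so $|u^{1/d}\sigma_k(u)-1|\ll\varepsilon\ll(\log T)^{-1/(d-1)}$, which is \eqref{eqn.Balanced}.
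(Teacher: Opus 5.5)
Your proof is correct and is essentially the paper's argument: Minkowski's theorem applied to the rank-$d$ lattice of vectors $(\log u,\lambda_1(u),\ldots,\lambda_{r+z-1}(u),\bth)$, with $\bth\in 2\pi\Z^z$ ruling out a nonzero lattice point with $\log u=0$, followed by exponentiating the small coordinates. The only difference is cosmetic: you restrict to totally positive units from the start, whereas the paper works with the units $u>0$, uses a box of height $\frac12\log T$, and then squares the resulting unit $v$ to get a totally positive $u=v^2\le T$. (Your appeal to discreteness of $\varphi(G)$ in the degenerate case is valid but unnecessary, since $\log u=0$ with $u>0$ already gives $u=1$.)
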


\begin{proof}
Let $P=\{u\in\Zl^\times : u>0\}$. Since $\pm 1$ are the only roots of unity in $K$, we have that $\varphi(P)=\varphi(\Zl^\times)$, and that $\varphi$ is injective on $P$. For $u\in P$ and $1\le k\le r+z$ write
\[
	\lambda_k(u)=\log|\sigma_k(u)|+\frac{1}{d}\log u ,
\]
so that $\lambda_k(u)=0$ if and only if $|\sigma_k(u)|=u^{-1/d}$. Since $r+2z=d$, equation \eqref{eqn.Hyperplane} gives
\[
	\lambda_1(u)+\cdots+\lambda_r(u)+2\lambda_{r+1}(u)+\cdots +2\lambda_{r+z}(u)=0,
\]
so that $\lambda_{r+z}(u)$ is determined by $\lambda_1(u),\allowbreak\ldots ,\allowbreak\lambda_{r+z-1}(u)$, and also
\[|\lambda_{r+z}(u)|\le d\max_{k<r+z}|\lambda_k(u)|.\] As a special case, note that $\lambda_{r+z}(u)=0$ if $r+z=1$.

Now let $\Gamma\subseteq\R^{d}=\R\times\R^{r+z-1}\times\R^{z}$ be the collection of all vectors
\[
	\bigl(\log u,\ \lambda_1(u),\ldots ,\lambda_{r+z-1}(u),\ \bth\bigr)
\]
with $u\in P$ and $\bth\in\R^z$ satisfying $\theta_j\equiv\Arg\sigma_{r+j}(u)~\mathrm{mod}~2\pi$ for $1\le j\le z$. Since each of $\log u$, $\lambda_k(u)$, and $\Arg\sigma_{r+j}(u)~\mathrm{mod}~2\pi$ is a homomorphism in $u$, $\Gamma$ is a subgroup of $\R^d$. It is discrete, because if all coordinates of a point of $\Gamma$ are less than $\delta$ in absolute value then, since \[\log|\sigma_k(u)|=\lambda_k(u)-\frac1d\log u\quad\text{ and }\quad |\lambda_{r+z}(u)|\le d\max_{k<r+z}|\lambda_k(u)|,\] all coordinates of $\varphi(u)$ are less than $(d+1)\delta$ in absolute value. For $\delta$ small enough the discreteness of $\varphi(P)$ forces $u=1$, and then $\bth\in 2\pi\Z^z$ forces $\bth=\bze$. Furthermore $\Gamma$ contains $\{\bze\}\times 2\pi\Z^z$, and the quotient of $\Gamma$ by this subgroup is isomorphic to $P\cong\Z^{r+z}$, so $\Gamma$ has rank $(r+z)+z=d$. Therefore $\Gamma$ is a lattice in $\R^d$. Write $V$ for its covolume.

Let
\[
	\varepsilon=\frac12\left(\frac{2^dV}{\log T}\right)^{1/(d-1)},
\]
and let $T_1$ be large enough that $\varepsilon\le 1/(4d)$ for $T\ge T_1$. The box
\[
	B=\left\{\bm{x}\in\R^d : |x_0|\le\tfrac12\log T,\ |x_k|\le\varepsilon\ \text{for}\ 1\le k\le d-1\right\}
\]
is compact, convex, and symmetric about the origin, and its volume is \[\log T\cdot(2\varepsilon)^{d-1}=2^dV.\] By Minkowski's convex body theorem, $B$ contains a nonzero point of $\Gamma$, which we write as $(\log v,\lambda_1(v),\ldots ,\lambda_{r+z-1}(v),\bth)$ with $v\in P$. Since $\Gamma$ is symmetric we may replace this point by its negative, which replaces $v$ by $v^{-1}$ and $\bth$ by $-\bth$, so we may assume that $\log v\ge 0$. In fact $\log v>0$. Indeed, if $\log v=0$ then $v=1$ and every $\lambda_k(v)=0$, but this does not by itself make the point the origin, because its last $z$ coordinates are only defined modulo $2\pi$; what rules this out is that $\bth\in 2\pi\Z^z$ together with $|\bth|\le\varepsilon<2\pi$ forces $\bth=\bze$. Therefore $1<v\le T^{1/2}$.

Finally, let $u=v^2$, so that $u\in\Zl^\times$ and $1<u\le T$. For $1\le k\le r$ we have that $\sigma_k(u)=\sigma_k(v)^2>0$, and therefore $u^{1/d}\sigma_k(u)=e^{\lambda_k(u)}$ with $|\lambda_k(u)|=2|\lambda_k(v)|\le 2d\varepsilon$. For $1\le j\le z$ we have that $\Arg\sigma_{r+j}(u)\equiv 2\theta_j~\mathrm{mod}~2\pi$ with $|2\theta_j|\le 2\varepsilon<\pi$, and therefore $u^{1/d}\sigma_{r+j}(u)=e^{\lambda_{r+j}(u)+2\iu\theta_j}$, with $|\lambda_{r+j}(u)+2\iu\theta_j|\le 2d\varepsilon+2\varepsilon\le 4d\varepsilon$; the same bound holds for the conjugates $\sigma_{r+z+j}(u)$. In all cases $u^{1/d}\sigma_k(u)=e^{\zeta_k}$ with $|\zeta_k|\le 4d\varepsilon\le 1$, so that $|u^{1/d}\sigma_k(u)-1|\le |\zeta_k|e^{|\zeta_k|}\le 12d\varepsilon\ll(\log T)^{-1/(d-1)}$.
\end{proof}

\section{Proof of Theorem \ref{thm.Main}}\label{sec.Proof}

Suppose that $u\in\Zl^\times$ satisfies $u>1$ and
\begin{equation}\label{eqn.UnitHyp}
	\sigma_k(u)=u^{-1/d}(1+\epsilon_k),\quad |\epsilon_k|\le\varepsilon,\quad\text{for}~1\le k\le d,
\end{equation}
for some $0<\varepsilon\le 1$. Then the matrix $M_u$ of Section \ref{sec.Background} can be written as
\[
	M_u=\begin{pmatrix}u&\bze^t\\ \bze&u^{-1/d}I_d\end{pmatrix}+u^{-1/d}E,
\]
where $E$ is block diagonal with blocks $0$, then $\epsilon_1,\ldots ,\epsilon_r$, then the $2\times 2$ matrices representing multiplication by $\epsilon_{r+1},\ldots ,\epsilon_{r+z}$ on $\C\cong\R^2$. Each row of $E$ has at most two nonzero entries, each of absolute value at most $\varepsilon$, so $|E\bm{x}|\le 2\varepsilon|\bm{x}|$ for all $\bm{x}\in\R^{d+1}$.

Now let $s=\alpha_1^*u$, which by Lemma \ref{lem.Multiplier} is an element of $\Lambda^*$, and write $\mc{R}\sigma(s)=(q,\bp-q\balph)^t$ as in \eqref{eqn.FrameAction}, with $q\in\Z$ and $\bp\in\Z^d$. Writing $\sigma(\alpha_1^*)=(\alpha_1^*,\bw)^t$ with $\bw\in\R^d$, equations \eqref{eqn.StartPoint} and \eqref{eqn.BlockForm} give $\alpha_1^*+\bc^t\bw=0$ and $\tilde{\mc{R}}\bw=e_1$. Therefore
\begin{align*}
	\begin{pmatrix}q\\ \bp-q\balph\end{pmatrix}=\mc{R}M_u\sigma(\alpha_1^*)
	&=\mc{R}\begin{pmatrix}u\alpha_1^*\\ u^{-1/d}\bw\end{pmatrix}+u^{-1/d}\mc{R}E\sigma(\alpha_1^*)\\
	&=\begin{pmatrix}\alpha_1^*(u-u^{-1/d})\\ u^{-1/d}e_1\end{pmatrix}+u^{-1/d}\mc{R}E\sigma(\alpha_1^*).
\end{align*}
Since $|\mc{R}E\sigma(\alpha_1^*)|\ll\varepsilon\le 1$, this gives
\begin{equation}\label{eqn.KeyEstimate}
\begin{gathered}
	q=\alpha_1^*u+O(u^{-1/d}),\qquad
	|p_1-q\alpha_1-u^{-1/d}|\ll\varepsilon u^{-1/d},\\
	|p_i-q\alpha_i|\ll\varepsilon u^{-1/d}\quad(2\le i\le d).
\end{gathered}
\end{equation}

Now let $T\ge 2$. By the first estimate in \eqref{eqn.KeyEstimate}, there is a constant $C'=C'(\balph)\ge 2$ such that $|q|\le C'u$ whenever $u>1$ satisfies \eqref{eqn.UnitHyp}. If $T\ge C'^{2}$ and $T/C'\ge T_1$, then we may apply Lemma \ref{lem.Balance} with $T/C'$ in place of $T$, to find $u\in\Zl^\times$ with $1<u\le T/C'$ satisfying \eqref{eqn.UnitHyp} with
\[
	\varepsilon\ll (\log (T/C'))^{-1/(d-1)}\le (\tfrac12\log T)^{-1/(d-1)}\ll(\log T)^{-1/(d-1)} .
\]
We claim that, if $T$ is also large enough that $\varepsilon\le 1$ and that the implied constant in the last estimate of \eqref{eqn.KeyEstimate} times $\varepsilon$ is less than $1$, then $q\not= 0$. Suppose by way of contradiction that $q=0$. Then, for $2\le i\le d$, the integers $p_i=p_i-q\alpha_i$ have absolute value less than $1$, so they are all zero, and $s=p_1\alpha_1^*$. But $s=\alpha_1^*u$, so $u=p_1$ is a rational integer which is also a unit, contradicting $u>1$.

Let $n=|q|$, so that $1\le n\le C'u\le T$, and let $p_i'=\mr{sgn}(q)p_i$ for $1\le i\le d$, so that $\|n\alpha_i\|\le|n\alpha_i-p_i'|=|p_i-q\alpha_i|$. Since $u^{-1/d}\le C'^{1/d}n^{-1/d}$, the second and third estimates in \eqref{eqn.KeyEstimate} give
\[
\begin{gathered}
	\|n\alpha_1\|\le u^{-1/d}(1+O(\varepsilon))\ll n^{-1/d},\\
	\|n\alpha_i\|\ll\varepsilon u^{-1/d}\ll n^{-1/d}(\log T)^{-1/(d-1)}\quad (2\le i\le d),
\end{gathered}
\]
which is \eqref{eqn.Peck} for all $T\ge T_2$, where $T_2=T_2(\balph)$ is the largest of the thresholds imposed on $T$ above. For $2\le T<T_2$ the bounds \eqref{eqn.Peck} hold with $n=1$, after enlarging $C$ if necessary, since $\|\alpha_i\|\le 1/2$. This completes the proof of Theorem \ref{thm.Main}.

\section{Proof of Theorem \ref{thm.Padic}}\label{sec.PadicProof}

Let $m$ be a positive integer and let
\[
	U_m=\{u\in\Zl^\times : u\equiv 1~\mathrm{mod}~m\Zl\}.
\]
Since $U_m$ is the kernel of the canonical homomorphism from $\Zl^\times$ to $(\Zl/m\Zl)^\times$, it is a subgroup of $\Zl^\times$ of finite index $I_m\le|(\Zl/m\Zl)^\times|\le m^{d+1}$. If $u=1+m\gamma\in U_m$ then, since $\Tr(\alpha_1^*)=0$ and $\alpha_1^*\gamma\in\Lambda^*$ by Lemma \ref{lem.Multiplier}, we have that
\begin{equation}\label{eqn.Divisible}
	q=\Tr(\alpha_1^*u)=m\Tr(\alpha_1^*\gamma)\in m\Z .
\end{equation}

Now we run the proof of Lemma \ref{lem.Balance} with $P\cap U_m$ in place of $P$. The lattice $\Gamma_m$ which this produces is a sublattice of $\Gamma$ of index $[P:P\cap U_m]\le I_m$, so its covolume is at most $I_mV$, and the proof goes through verbatim to show that there are constants $c_1,c_2>0$, depending only on $\Lambda$, such that for all $T$ with $\log T\ge c_1I_m$ there is $u\in U_m$ with $1<u\le T$ and
\[
	\bigl|u^{1/d}\sigma_k(u)-1\bigr|\le c_2\left(\frac{I_m}{\log T}\right)^{1/(d-1)}\quad(1\le k\le d).
\]
Feeding this unit into the proof of Theorem \ref{thm.Main} in place of the unit from Lemma \ref{lem.Balance}, and enlarging $c_1$ if necessary so that the thresholds imposed on $T$ there are met, we find that for all $T$ with $\log T\ge c_1I_m$ there is an integer $1\le n\le T$ which, by \eqref{eqn.Divisible}, is divisible by $m$, and which satisfies
\begin{equation}\label{eqn.PadicKey}
	\|n\alpha_1\|\ll n^{-1/d},\qquad \|n\alpha_i\|\ll n^{-1/d}\left(\frac{I_m}{\log T}\right)^{1/(d-1)}\ll n^{-1/d}\quad(2\le i\le d),
\end{equation}
with implied constants which do not depend on $m$. This gives \eqref{eqn.Padic1} for one value of $n$, and letting $T\rar\infty$ gives infinitely many, since for fixed $n$ the left hand side of the second inequality in \eqref{eqn.PadicKey} is positive while the right hand side tends to zero.

For \eqref{eqn.Padic2}, let $m=p^k$. Every element of the kernel of the canonical homomorphism from $(\Zl/p^k\Zl)^\times$ to $(\Zl/p\Zl)^\times$ has order dividing $p^{k-1}$, since $x\equiv 1~\mathrm{mod}~p^j\Zl$ with $j\ge 1$ implies that $x^p\equiv 1~\mathrm{mod}~p^{j+1}\Zl$, and $(\Zl/p\Zl)^\times$ has fewer than $p^{d+1}$ elements. Therefore every element of $(\Zl/p^k\Zl)^\times$ has order less than $p^{k+d}$. By Dirichlet's unit theorem and Lemma \ref{lem.Multiplier}, $\Zl^\times$ is generated by $-1$ together with $r+z$ further elements, so its image in $(\Zl/p^k\Zl)^\times$ has order at most $2p^{(k+d)(r+z)}$, which is to say
\[
	I_{p^k}\le 2p^{(k+d)(r+z)}\ll_p p^{k(r+z)} .
\]
Taking $T=T_k=\exp(c_1I_{p^k})$ in \eqref{eqn.PadicKey}, we obtain integers $n_k\le T_k$, divisible by $p^k$, with $n_k^{1/d}\max_i\|n_k\alpha_i\|\ll 1$ and
\[
	|n_k|_p\le p^{-k}\ll_p I_{p^k}^{-1/(r+z)}=\left(\frac{\log T_k}{c_1}\right)^{-1/(r+z)}\ll(\log n_k)^{-1/(r+z)} .
\]
Since $n_k\ge p^k$, we have that $n_k\rar\infty$, and \eqref{eqn.Padic2} follows. This completes the proof of Theorem \ref{thm.Padic}.

We remark that the exponent $1/(r+z)$ comes entirely from the index of $U_{p^k}$ in $\Zl^\times$. What the proof of Theorem \ref{thm.Main} actually requires is not that $u\in U_{p^k}$ but only that $p^k$ divides $\Tr(\alpha_1^*u)$, and this is satisfied by many more units: for any fixed tolerance $\varepsilon$ in \eqref{eqn.UnitHyp}, the number of units satisfying \eqref{eqn.UnitHyp} with $0<\log u\le X$ is $\asymp X$, by counting points of $\Gamma$ in a box, and if the integers $\Tr(\alpha_1^*u)$ were equidistributed modulo $p^k$ over these units then one would expect a suitable $u$ with $\log u\ll_p p^k$, and hence the exponent $1$ (cf. discussion after statement of Theorem \ref{thm.Padic} above). Numerical experiments in the fields generated by roots of $x^3-x-1$, $x^3-3x-1$, $x^3-4x-1$ and $x^4-x-1$, with $p=2$ and $p=3$, are consistent with this expectation.

\section{Unequal weights: proofs of Theorems \ref{thm.Weighted} and \ref{thm.Biquad}}\label{sec.Weighted}

The proof of Theorem \ref{thm.Main} in Section \ref{sec.Proof} uses the unit supplied by Lemma \ref{lem.Balance} only through the single bound $|\epsilon_k|\le\varepsilon$, and consequently it bounds all of the errors $p_i-q\alpha_i$, $2\le i\le d$, by the same quantity. To distribute the savings unevenly we need an exact description of these errors as a function of the deviation $\bm\epsilon$, and a version of Lemma \ref{lem.Balance} in which the convex body is allowed to have different side lengths in different directions. We will prove Theorem \ref{thm.Weighted} first.

\noindent\emph{The error map.}\quad Let
\[
	V_\R=\bigl\{\bm\xi\in\C^d:\ \xi_k\in\R\ (1\le k\le r),\ \xi_{r+z+j}=\overline{\xi_{r+j}}\ (1\le j\le z)\bigr\},
\]
a real vector space of dimension $r+2z=d$, on which we write $|\bm\xi|=\max_k|\xi_k|$, and let
\[
	H=\Bigl\{\bm\xi\in V_\R:\ \xi_1+\cdots+\xi_d=0\Bigr\},
\]
a subspace of dimension $d-1$. For $\bm\xi\in V_\R$ let $\iota(\bm\xi)\in\R^d$ be the vector obtained by listing $\xi_1,\ldots ,\xi_r$ and then, for each $1\le j\le z$, the real and imaginary parts of $\xi_{r+j}$. This is the order in which coordinates are taken in $\sigma$, so that for $\beta\in K$ the vector $\iota(\sigma_1(\beta),\ldots ,\sigma_d(\beta))$ consists of the last $d$ coordinates of $\sigma(\beta)$. Define $\Psi:V_\R\rar\R^d$ by
\begin{equation}\label{eqn.PsiDef}
	\Psi(\bm\xi)=\tilde{\mc R}\,\iota\bigl(\sigma_1(\alpha_1^*)\xi_1,\ldots ,\sigma_d(\alpha_1^*)\xi_d\bigr).
\end{equation}

\begin{lemma}\label{lem.ErrorMap}
The map $\Psi$ is a linear isomorphism from $V_\R$ onto $\R^d$, and its components are given by
\begin{equation}\label{eqn.PsiTrace}
	\Psi_i(\bm\xi)=\sum_{k=1}^{d}\sigma_k(\alpha_1^*)\bigl(\sigma_k(\alpha_i)-\alpha_i\bigr)\xi_k\qquad(1\le i\le d).
\end{equation}
In particular $\Psi(\bm 1)=e_1$, where $\bm 1=(1,\ldots ,1)$, and $\Psi'=(\Psi_2,\ldots ,\Psi_d)$ restricts to a linear isomorphism from $H$ onto $\R^{d-1}$.
\end{lemma}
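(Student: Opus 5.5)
Because $\tilde{\mc R}$ is invertible and the other two ingredients of $\Psi$ are invertible and linear, $\Psi$ is an isomorphism. The formula \eqref{eqn.PsiTrace} will come from applying \eqref{eqn.FrameAction} coordinatewise, and the statements about $\bm 1$ and $H$ follow from \eqref{eqn.StartPoint} and a dimension count. The steps, in order, are as follows.

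\emph{Isomorphism.} First, $\Psi$ is $\R$-linear. Because $\mc R$ is invertible with the block form \eqref{eqn.BlockForm}, $\tilde{\mc R}$ is invertible. The map $\iota:V_\R\rar\R^d$ is an $\R$-linear bijection. The map $\bm\xi\mapsto(\sigma_k(\alpha_1^*)\xi_k)_k$ sends $V_\R$ to itself, since $\sigma_{r+z+j}(\alpha_1^*)=\overline{\sigma_{r+j}(\alpha_1^*)}$ and $\sigma_k(\alpha_1^*)\in\R$ for $k\le r$. It is injective because $\alpha_1^*\neq0$, so every conjugate is nonzero. Composing the three maps shows that $\Psi$ is an isomorphism.

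\emph{The formula \eqref{eqn.PsiTrace}.} By linearity it suffices to check the formula on a spanning set of $V_\R$. A convenient spanning set is the vectors $(\sigma_1(\gamma),\ldots,\sigma_d(\gamma))$ with $\gamma\in K$, which span $V_\R$ over $\R$ because $\sigma(K)$ is dense in $\R^{d+1}$. For such a vector, $\Psi(\bm\xi)=\tilde{\mc R}\,\bw_\gamma$, where $\bw_\gamma$ denotes the last $d$ coordinates of $\sigma(\alpha_1^*\gamma)$.

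Write $s=\alpha_1^*\gamma=q\alpha_0^*+\sum_i p_i\alpha_i^*$ with rational coefficients $q=\Tr(s)$ and $p_i=\Tr(\alpha_i s)$. Identity \eqref{eqn.FrameAction} extends to $\Q$-coefficients by linearity, so it gives $\tilde{\mc R}\bw_\gamma=\bp-q\balph$. Its $i$th entry is
\[
\Tr(\alpha_i\alpha_1^*\gamma)-\alpha_i\Tr(\alpha_1^*\gamma)=\sum_{k=0}^d\sigma_k(\alpha_1^*\gamma)\bigl(\sigma_k(\alpha_i)-\alpha_i\bigr).
\]
The $k=0$ term vanishes because $\sigma_0$ is the identity. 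What remains is exactly the right-hand side of \eqref{eqn.PsiTrace} with $\xi_k=\sigma_k(\gamma)$.

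\emph{The vector $\bm 1$.} Take $\gamma=1$. Then $\Psi(\bm1)=\tilde{\mc R}\bw=e_1$ by \eqref{eqn.StartPoint}. One can also read this off \eqref{eqn.PsiTrace} directly, since $\Tr(\alpha_i\alpha_1^*)-\alpha_i\Tr(\alpha_1^*)=\delta_{i1}$.

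\emph{Restriction to $H$.} The space $H$ has dimension $d-1$ and $\bm 1\notin H$, so $V_\R=H\oplus\R\bm1$. Suppose $\bm\xi\in H$ and $\Psi'(\bm\xi)=0$. Then $\Psi(\bm\xi)=te_1=\Psi(t\bm1)$ for some real $t$, so injectivity of $\Psi$ gives $\bm\xi=t\bm1$. This vector lies in $H$ only if $t=0$. So $\Psi'|_H$ is injective, and since $\dim H=d-1$ it is an isomorphism onto $\R^{d-1}$.

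The only delicate point is extending \eqref{eqn.FrameAction} beyond $\Lambda^*$. It holds for all of $K$, since both sides are $\Q$-linear in $s$, or directly because $(\Sigma^*)^{-1}\sigma(s)$ is the rational coordinate vector of $s$ in the dual basis. Everything else is routine linear algebra.
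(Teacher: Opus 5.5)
Your proposal is correct and takes essentially the same route as the paper. You extend \eqref{eqn.FrameAction} to all of $K$, verify \eqref{eqn.PsiTrace} on the spanning set of conjugate vectors and extend by $\R$-linearity, read off $\Psi(\bm 1)=e_1$ from $\Tr(\alpha_i\alpha_1^*)=\delta_{i1}$, and deduce the statement on $H$ from $\Psi'(\bm\xi)=0\Rightarrow\bm\xi\in\R\bm 1$ together with $\bm 1\notin H$. The only cosmetic difference is the order of steps: you check the formula directly at $\bm\xi=(\sigma_k(\gamma))_k$ with $s=\alpha_1^*\gamma$, whereas the paper first extends the identity for $\tilde{\mc R}\,\iota(\sigma_1(\beta),\ldots,\sigma_d(\beta))$ to all of $V_\R$ and then substitutes $\xi_k\mapsto\sigma_k(\alpha_1^*)\xi_k$.
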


\begin{proof}
Equation \eqref{eqn.FrameAction} holds, with the same proof, for every $\beta\in K$, with $q=\Tr(\beta)$ and $p_i=\Tr(\alpha_i\beta)$ its coordinates with respect to the dual basis. Together with \eqref{eqn.BlockForm} this gives
\[
	\tilde{\mc R}\,\iota\bigl(\sigma_1(\beta),\ldots ,\sigma_d(\beta)\bigr)
	=\bigl(\Tr(\alpha_i\beta)-\Tr(\beta)\alpha_i\bigr)_{i=1}^{d}
	=\Bigl(\sum_{k=1}^{d}\bigl(\sigma_k(\alpha_i)-\alpha_i\bigr)\sigma_k(\beta)\Bigr)_{i=1}^{d},
\]
the terms with $k=0$ having canceled. Both sides of this equation are $\R$-linear in the vector $(\sigma_1(\beta),\ldots ,\sigma_d(\beta))$, and these vectors span $V_\R$, so the identity holds with an arbitrary $\bm\xi\in V_\R$ in place of $(\sigma_k(\beta))_k$. Replacing $\xi_k$ by $\sigma_k(\alpha_1^*)\xi_k$ gives \eqref{eqn.PsiTrace}.

Taking $\bm\xi=\bm 1$ in \eqref{eqn.PsiTrace}, and using $\Tr(\alpha_1^*)=\Tr(\alpha_0\alpha_1^*)=0$, we have that $\Psi_i(\bm 1)=\Tr(\alpha_i\alpha_1^*)=\delta_{i1}$, which is to say that $\Psi(\bm 1)=e_1$. Next, the map $\bm\xi\mapsto(\sigma_k(\alpha_1^*)\xi_k)_k$ is a bijection of $V_\R$, since every $\sigma_k(\alpha_1^*)$ is nonzero and $\sigma_{r+z+j}(\alpha_1^*)=\overline{\sigma_{r+j}(\alpha_1^*)}$, the map $\iota$ is a bijection onto $\R^d$, and $\tilde{\mc R}$ is invertible because $\det\mc R=\det\tilde{\mc R}$ by \eqref{eqn.BlockForm}. Therefore $\Psi$ is an isomorphism, and
\[
	\ker\Psi'=\Psi^{-1}(\R e_1)=\R\bm 1 .
\]
Since $\bm 1\notin H$, $\Psi'$ is injective on $H$, and $\dim H=d-1$ finishes the proof.
\end{proof}

Now suppose that $u\in\Zl^\times$ satisfies \eqref{eqn.UnitHyp}. The vector $E\sigma(\alpha_1^*)$ of Section \ref{sec.Proof} has first coordinate $0$, and its remaining coordinates are those of $\iota(\sigma_1(\alpha_1^*)\epsilon_1,\ldots ,\sigma_d(\alpha_1^*)\epsilon_d)$, so by \eqref{eqn.BlockForm} the last $d$ coordinates of $\mc RE\sigma(\alpha_1^*)$ are $\Psi(\bm\epsilon)$. The computation of Section \ref{sec.Proof} therefore gives, in place of the last two estimates of \eqref{eqn.KeyEstimate}, the exact identity
\begin{equation}\label{eqn.ExactError}
	p_i-q\alpha_i=u^{-1/d}\bigl(\delta_{i1}+\Psi_i(\bm\epsilon)\bigr)\qquad(1\le i\le d),
\end{equation}
while the first estimate of \eqref{eqn.KeyEstimate} for $q$ is unchanged. Substituting $\epsilon_k=u^{1/d}\sigma_k(u)-1$ into \eqref{eqn.PsiTrace} and using $\Psi(\bm 1)=e_1$ shows that \[\Psi_i(\bm\epsilon)=u^{1/d}\bigl(\Tr(\alpha_i\alpha_1^*u)-\alpha_i\Tr(\alpha_1^*u)\bigr)-\delta_{i1},\]
which is \eqref{eqn.ExactError} again, written in a different form. Up to this normalization, the components of $\Psi(\bm\epsilon)$ are the linear forms in the conjugates of $u$ that Peck works with directly in \cite{Peck1961}. The rectifying frame adds a geometric point of view: choosing a unit amounts to choosing a point $\bm\epsilon$, and the $d-1$ errors we wish to make small are the coordinates of that point in the system $\Psi_2,\ldots ,\Psi_d$.

\noindent\emph{Balanced units in a box of prescribed shape.}\quad We will use the notation from the proof of Lemma \ref{lem.Balance}. For a totally positive $u\in\Zl^\times$ with $u>1$, write $\bm\zeta(u)\in V_\R$ for the vector with
\[
	\zeta_k(u)=\lambda_k(u)\ \ (1\le k\le r),\qquad
	\zeta_{r+j}(u)=\lambda_{r+j}(u)+\iu\Arg\sigma_{r+j}(u)\ \ (1\le j\le z).
\]
Then we have that
\begin{equation}\label{eqn.ZetaDef}
	u^{1/d}\sigma_k(u)=e^{\zeta_k(u)}\quad(1\le k\le d),\qquad\text{and hence}\qquad \epsilon_k=e^{\zeta_k(u)}-1 .
\end{equation}
Since $|\Nm(u)|=1$ and $\sigma_0(u)=u$, the real parts of the $\zeta_k(u)$ sum to $\log(1/u)+\log u=0$, while the imaginary parts cancel in conjugate pairs; thus $\bm\zeta(u)\in H$.

\begin{lemma}\label{lem.BalanceBox}
Let $d\ge 2$ and let $Y:H\rar\R^{d-1}$ be a linear isomorphism. There are positive constants $L=L(Y)$ and $J=J(Y,\Lambda)$ with the following property. Suppose that $T>1$ and that $\eta_2,\ldots ,\eta_d>0$ satisfy
\begin{equation}\label{eqn.BoxVol}
	\prod_{i=2}^{d}\eta_i=\frac{J}{\log T}\qquad\text{and}\qquad L\max_{2\le i\le d}\eta_i\le\frac14 .
\end{equation}
Then there is a totally positive $u\in\Zl^\times$ with $1<u\le T$ and
\begin{equation}\label{eqn.BoxConc}
	\bigl|Y_i(\bm\zeta(u))\bigr|\le 2\eta_i\quad(2\le i\le d),\qquad
	\bigl|\bm\zeta(u)\bigr|\le 2L\max_{2\le i\le d}\eta_i\le\frac12 .
\end{equation}
\end{lemma}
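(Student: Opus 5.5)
This is a rerun of the Minkowski argument of Lemma \ref{lem.Balance}, with the cube in the $\lambda$-coordinates replaced by a box whose sides are adapted to $Y$. We use the lattice $\Gamma\subseteq\R^d=\R\times\R^{r+z-1}\times\R^z$ from that proof. Identify the last $d-1$ coordinates $(\lambda_1,\ldots,\lambda_{r+z-1},\bth)$ with an element of $H$ through the linear map $\Phi$ that sends them to $\bm\xi\in H$. This map sets $\xi_k=\lambda_k$ for $k\le r$ and $\xi_{r+j}=\lambda_{r+j}+\iu\theta_j$, with $\lambda_{r+z}$ determined by the hyperplane relation. The map $\Phi$ is a linear isomorphism from $\R^{d-1}$ onto $H$. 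By construction, $\Phi$ applied to the point of $\Gamma$ attached to $u$ agrees with $\bm\zeta(u)$ whenever the chosen $\bth$ are the principal arguments.

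Define the convex body
\[
B=\Bigl\{(x_0,\bm y)\in\R\times\R^{d-1}:\ |x_0|\le\tfrac12\log T,\ |Y_i(\Phi\bm y)|\le\eta_i\ (2\le i\le d)\Bigr\}.
\]
It is compact, convex and symmetric. Its volume is $\log T\cdot 2^{d-1}\prod\eta_i/|\det(Y\circ\Phi)|$. Choose $J=2V|\det(Y\circ\Phi)|$, where $V$ is the covolume of $\Gamma$. Then the first condition in \eqref{eqn.BoxVol} makes the volume exactly $2^dV$, and Minkowski's theorem gives a nonzero point $(\log v,\bm y)\in\Gamma\cap B$. Let $L$ be the operator norm of $(Y)^{-1}$ from the sup norm on $\R^{d-1}$ to the norm $|\cdot|$ on $H$. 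We may also need to enlarge $L$ by the constant from the discreteness step, which is harmless since $L$ depends only on $Y$ (and $d$). Then $|\Phi\bm y|\le L\max\eta_i\le 1/4$. This bounds $|\theta_j|$ well below $\pi$, so the same argument as in Lemma \ref{lem.Balance} applies. After replacing the point by its negative if necessary, we have $\log v\ge0$. If $\log v=0$, then $v=1$, all $\lambda_k=0$, and $\bth\in2\pi\Z^z$ is small, hence $\bth=\bze$. That is impossible because the point is nonzero, so $1<v\le T^{1/2}$.

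Set $u=v^2$. Then $u$ is totally positive, because $\sigma_k(u)=\sigma_k(v)^2$ for real embeddings, and $1<u\le T$. Squaring doubles the $\lambda$'s, and since $|\theta_j|\le1/4<\pi/2$ the principal argument of $\sigma_{r+j}(u)$ is exactly $2\theta_j$. Hence $\bm\zeta(u)=2\Phi\bm y$ exactly, with no reduction mod $2\pi$ needed. This gives $|Y_i(\bm\zeta(u))|\le2\eta_i$ and $|\bm\zeta(u)|\le2L\max\eta_i\le1/2$, which is \eqref{eqn.BoxConc}.

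The only delicate point is this last bookkeeping with arguments modulo $2\pi$. We must make sure that the coordinate $\bth$ produced by Minkowski's theorem corresponds to the principal arguments after doubling. This is exactly what the condition $L\max\eta_i\le 1/4$ guarantees, so $L$ must be chosen at least as large as the norm of $Y^{-1}$ (suitably composed with the coordinate identifications). The remaining steps are routine volume computations.
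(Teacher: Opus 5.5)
Your proposal is correct and follows the paper's proof essentially step for step. It uses the same identification $\Phi$ (the paper's $\Theta$) of the last $d-1$ coordinates of $\Gamma$ with $H$, the same box with $J=2V|\det(Y\circ\Theta)|$, the same Minkowski argument ruling out $\log v=0$, and the same passage to $u=v^2$ so that $\bm\zeta(u)=2\Theta(\lambda(v),\bth)$ exactly.

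The hedge about enlarging $L$ is unnecessary. Since $|\theta_j|\le|\xi_{r+j}|$ and $|\lambda_k|\le|\xi_k|$, you have $|(\lambda,\bth)|\le|\Theta(\lambda,\bth)|$, so the operator norm of $Y^{-1}$ already works as the paper's $L$ in \eqref{eqn.LDef}.
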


\begin{proof}
We use the notation of Lemma \ref{lem.Balance}, and follow its proof. Let $\Theta:\R^{r+z-1}\times\R^z\rar H$ be the map which sends $(\lambda_1,\ldots ,\lambda_{r+z-1},\bth)$ to the vector $\bm\xi$ with $\xi_k=\lambda_k$ for $k\le r$ and $\xi_{r+j}=\lambda_{r+j}+\iu\theta_j$, where $\lambda_{r+z}$ is defined by
\[
	\lambda_1+\cdots+\lambda_r+2\lambda_{r+1}+\cdots+2\lambda_{r+z}=0 .
\]
This relation is exactly the condition $\sum_k\xi_k=0$, so $\Theta$ is a linear isomorphism onto $H$, and $Y\circ\Theta$ is a linear isomorphism of $\R^{d-1}$. Therefore \[\max_{2\le i\le d}\bigl|Y_i(\Theta(\lambda,\bth))\bigr|\] is a norm on $\R^{d-1}$ and, all norms on $\R^{d-1}$ being equivalent, we may take $L$ to be a constant for which
\begin{equation}\label{eqn.LDef}
	\max\bigl(|(\lambda,\bth)|,\ |\Theta(\lambda,\bth)|\bigr)\ \le\ L\!\!\max_{2\le i\le d}\bigl|Y_i(\Theta(\lambda,\bth))\bigr|
\end{equation}
for all $(\lambda,\bth)$. Let $D=|\det(Y\circ\Theta)|$ and $J=2DV$, with $V$ the covolume of $\Gamma$.

The set
\[
	B=\Bigl\{(x_0,\lambda,\bth)\in\R\times\R^{r+z-1}\times\R^z:\ |x_0|\le\tfrac12\log T,\ \bigl|Y_i(\Theta(\lambda,\bth))\bigr|\le\eta_i\ \ (2\le i\le d)\Bigr\}
\]
is compact, convex and symmetric about the origin, and its volume is
\[
	\log T\cdot D^{-1}\prod_{i=2}^{d}2\eta_i=\frac{2^{d-1}\log T}{D}\prod_{i=2}^{d}\eta_i=2^{d}V,
\]
by the first condition in \eqref{eqn.BoxVol}. By Minkowski's convex body theorem $B$ contains a nonzero point of $\Gamma$, say $(\log v,\lambda(v),\bth)$ with $v\in P$. Replacing $v$ by $v^{-1}$ and $\bth$ by $-\bth$ if necessary, we may assume that $\log v\ge 0$. If $\log v=0$ then $v=1$ and $\lambda(v)=\bze$, while $|\bth|\le L\max_i\eta_i\le 1/4$ by \eqref{eqn.LDef} and the second condition in \eqref{eqn.BoxVol}; since $\bth\in 2\pi\Z^z$ this forces $\bth=\bze$ and the point to be the origin. Therefore $1<v\le T^{1/2}$.

Let $u=v^2$, so that $u\in\Zl^\times$ and $1<u\le T$. For $1\le k\le r$ we have that $\sigma_k(u)=\sigma_k(v)^2>0$, so $u$ is totally positive, and $\lambda_k(u)=2\lambda_k(v)$. For $1\le j\le z$ we have that $\Arg\sigma_{r+j}(u)\equiv 2\theta_j~\mathrm{mod}~2\pi$ with $|2\theta_j|\le 1/2<\pi$, so that $\Arg\sigma_{r+j}(u)=2\theta_j$. Since $\lambda_{r+z}$ is determined by the linear relation above, it follows that
\[
	\bm\zeta(u)=2\,\Theta\bigl(\lambda(v),\bth\bigr),
\]
and \eqref{eqn.BoxConc} now follows from the linearity of $Y$, together with \eqref{eqn.LDef}.
\end{proof}

With $Y=\Theta^{-1}$ and all of the $\eta_i$ equal, this is Lemma \ref{lem.Balance} with different constants; the generalization is only that the $d-1$ side lengths are now free, subject to their product.

\noindent\emph{Proof of Theorem \ref{thm.Weighted}:}\quad Apply Lemma \ref{lem.BalanceBox} with $Y=\Psi'|_H$, which is a linear isomorphism by Lemma \ref{lem.ErrorMap}, and write $L$ and $J$ for the resulting constants. Let $\|\Psi\|$ be a constant for which $|\Psi_i(\bm\xi)|\le\|\Psi\|\,|\bm\xi|$ for all $\bm\xi\in V_\R$ and all $i$, let $C'\ge 2$ be as in Section \ref{sec.Proof}, and set
\[
	f_{\mr{min}}=\min_{2\le i\le d}f_i(T),\qquad f_{\mr{max}}=\max_{2\le i\le d}f_i(T) .
\]
Suppose that $T\ge C'^2$, put
\[
	\mu=\left(\frac{J\log T}{\log (T/C')}\right)^{1/(d-1)},\qquad \eta_i=\frac{\mu}{f_i(T)}\quad(2\le i\le d),
\]
and note that $J^{1/(d-1)}\le\mu\le(2J)^{1/(d-1)}$, and that
\[
	\prod_{i=2}^{d}\eta_i=\frac{\mu^{d-1}}{f_2(T)\cdots f_d(T)}=\frac{\mu^{d-1}}{\log T}=\frac{J}{\log (T/C')},
\]
since the $f_i$ are admissible. This is the first condition in \eqref{eqn.BoxVol}, with $T$ replaced by $T/C'$.

Since $f_2(T)\cdots f_d(T)=\log T$ and there are $d-1$ factors, we have that $f_{\mr{max}}\ge(\log T)^{1/(d-1)}$, so hypothesis \eqref{eqn.WeightCond} gives
\begin{equation}\label{eqn.FminGrows}
	f_{\mr{min}}\ \ge\ \left(\frac{f_{\mr{max}}}{c}\right)^{1/2}\ \ge\ \frac{(\log T)^{1/(2(d-1))}}{c^{1/2}} .
\end{equation}
Assume that $T$ is large enough (depending on $\balph$ and $c$) so that $f_{\mr{min}}\ge 4L\mu$, which is the second condition in \eqref{eqn.BoxVol}, since $\max_i\eta_i=\mu/f_{\mr{min}}$.

Lemma \ref{lem.BalanceBox}, applied at $T/C'$, now produces a totally positive $u\in\Zl^\times$ with $1<u\le T/C'$,
\[
	\bigl|\Psi_i(\bm\zeta(u))\bigr|\le 2\eta_i\quad(2\le i\le d),\qquad
	|\bm\zeta(u)|\le\frac{2L\mu}{f_{\mr{min}}}\le\frac12 .
\]
Write $\bm\zeta=\bm\zeta(u)$. By \eqref{eqn.ZetaDef} and the estimates $|e^w-1-w|\le|w|^2$ and $|e^w-1|\le 2|w|$, valid for $|w|\le 1$, we have that $|\bm\epsilon-\bm\zeta|\le|\bm\zeta|^2$ and $|\bm\epsilon|\le 2|\bm\zeta|\le 1$. Therefore, for $2\le i\le d$,
\[
	\bigl|\Psi_i(\bm\epsilon)\bigr|\le\bigl|\Psi_i(\bm\zeta)\bigr|+\|\Psi\|\,|\bm\zeta|^{2}
	\le 2\eta_i+\frac{4\|\Psi\|L^2\mu^2}{f_{\mr{min}}^{2}} .
\]
Here we use hypothesis \eqref{eqn.WeightCond}, to bound the nonlinear part on the right hand side. It gives $f_{\mr{min}}^{-2}\le c\,f_{\mr{max}}^{-1}\le c\,f_i(T)^{-1}=c\,\eta_i/\mu$, so 
\[\frac{4\|\Psi\|L^2\mu^2}{f_{\mr{min}}^{2}}\le 4c\|\Psi\|L^2\mu\,\eta_i,\]
and hence $|\Psi_i(\bm\epsilon)|\le c'\eta_i$, for some constant $c'$ depending on $\balph$ and $c$.

Let $q$ and $\bp$ be as in Section \ref{sec.Proof}, for this $u$. By \eqref{eqn.ExactError},
\begin{align}\label{eqn.WeightedErrors}
	|p_1-q\alpha_1|&\le u^{-1/d}\bigl(1+\|\Psi\|\,|\bm\epsilon|\bigr)\le\bigl(1+\|\Psi\|\bigr)u^{-1/d},\nonumber\\
	|p_i-q\alpha_i|&\le\frac{c'\mu\,u^{-1/d}}{f_i(T)}\quad(2\le i\le d).
\end{align}
Assume now that $T$ is large enough so that $c'\mu/f_{\mr{min}}<1$. Then $q\not=0$: otherwise the integers $p_i=p_i-q\alpha_i$ satisfy $|p_i|<1$ for $2\le i\le d$ and so vanish, whence $\alpha_1^*u=p_1\alpha_1^*$ and $u=p_1$ is a rational integer which is a unit, contradicting $u>1$.

Let $n=|q|$, so that $1\le n\le C'u\le T$ by the first estimate of \eqref{eqn.KeyEstimate}, and let $p_i'=\mr{sgn}(q)p_i$, so that $\|n\alpha_i\|\le|p_i-q\alpha_i|$. Since $u^{-1/d}\le C'^{1/d}n^{-1/d}$, the bounds \eqref{eqn.WeightedErrors} are \eqref{eqn.Weighted}, with
\[
	C=C'^{1/d}\max\bigl(1+\|\Psi\|,\ c'(2J)^{1/(d-1)}\bigr).
\]
This completes the proof of Theorem \ref{thm.Weighted}.

\noindent\emph{Proof of Theorem \ref{thm.Biquad}:}\quad Here $d=3$, $r=3$ and $z=0$, so $V_\R=\R^3$ and $H$ is the plane $\zeta_1+\zeta_2+\zeta_3=0$. Label the nontrivial embeddings of $K=\Q(\sqrt a,\sqrt b)$ by
\[
	\sigma_1:(\sqrt a,\sqrt b)\mapsto(-\sqrt a,\sqrt b),\qquad
	\sigma_2:(\sqrt a,\sqrt b)\mapsto(\sqrt a,-\sqrt b),\qquad
	\sigma_3=\sigma_1\sigma_2 .
\]
Since the traces of $\sqrt a$, $\sqrt b$ and $\sqrt{ab}$ all vanish, we have that $\Tr(\alpha_i\alpha_\ell)=4\alpha_i^2\delta_{i\ell}$ for $1\le i,\ell\le 3$ and $\Tr(\alpha_0\alpha_\ell)=4\delta_{0\ell}$, so the dual basis is given by $\alpha_0^*=1/4$ and $\alpha_i^*=1/(4\alpha_i)$. In particular $\alpha_1^*=1/(4\sqrt{ab})$, and
\[
	\sigma_1(\alpha_1^*)=\sigma_2(\alpha_1^*)=-\alpha_1^*,\qquad \sigma_3(\alpha_1^*)=\alpha_1^* .
\]
Substituting these, together with $\sigma_3(\alpha_1)=\alpha_1,$ $\sigma_2(\alpha_2)=\alpha_2$, $\sigma_1(\alpha_3)=\alpha_3$ and $\sigma_k(\alpha_i)=-\alpha_i$ in the remaining cases, into \eqref{eqn.PsiTrace} gives
\begin{equation}\label{eqn.PsiBiquad}
	\Psi_1(\bm\xi)=\tfrac12(\xi_1+\xi_2),\qquad
	\Psi_2(\bm\xi)=2\alpha_1^*\alpha_2(\xi_1-\xi_3),\qquad
	\Psi_3(\bm\xi)=2\alpha_1^*\alpha_3(\xi_2-\xi_3).
\end{equation}
Accordingly we set $y_2(\bm\zeta)=\zeta_1-\zeta_3$ and $y_3(\bm\zeta)=\zeta_2-\zeta_3$. On $H$ we have that
\[
	\zeta_3=-\tfrac13(y_2+y_3),\qquad \zeta_1=\tfrac13(2y_2-y_3),\qquad \zeta_2=\tfrac13(2y_3-y_2),
\]
so $Y=(y_2,y_3)$ is a linear isomorphism from $H$ onto $\R^2$ with $|\bm\zeta|\le|Y(\bm\zeta)|$. Here $z=0$ and $\Theta(\lambda_1,\lambda_2)=(\lambda_1,\lambda_2,-\lambda_1-\lambda_2)$, so $|(\lambda_1,\lambda_2)|\le|\Theta(\lambda_1,\lambda_2)|$ and \eqref{eqn.LDef} holds with $L=1$.

The essential point is that, by \eqref{eqn.ZetaDef} and \eqref{eqn.PsiBiquad},
\begin{equation}\label{eqn.FlatError}
	\begin{split}
		&\Psi_2(\bm\epsilon)=2\alpha_1^*\alpha_2\bigl(e^{\zeta_1}-e^{\zeta_3}\bigr)=2\alpha_1^*\alpha_2\,e^{\zeta_3}\bigl(e^{y_2}-1\bigr),\quad\text{and}\\
	&\Psi_3(\bm\epsilon)=2\alpha_1^*\alpha_3\,e^{\zeta_3}\bigl(e^{y_3}-1\bigr).
	\end{split}
\end{equation}
The set of $\bm\zeta\in H$ on which $\Psi_i(\bm\epsilon)$ vanishes is the hyperplane $y_i=0$, an equation in only one of the two variables $y_2, y_3$. This circumvents the need to consider the second order approximation to $\bm\epsilon$, as in the proof of Theorem \ref{thm.Weighted}, and it is what allows us to dispense with the extra hypothesis \eqref{eqn.WeightCond}.

Apply Lemma \ref{lem.BalanceBox} with $Y=(y_2,y_3)$ and $L=1$, and write $J$ for the resulting constant. Given $T\ge C'^2$, with $C'$ as in Section \ref{sec.Proof}, let $i_1$ be an index in $\{2,3\}$ with $f_{i_1}(T)=\max(f_2(T),f_3(T))$, let $i_2$ be the other one, and put
\[
	J'=\frac{J\log T}{\log(T/C')},\qquad \mu_{i_1}=8J',\qquad\mu_{i_2}=\frac18,\qquad \eta_i=\frac{\mu_i}{f_i(T)} .
\]
Then $\mu_{i_1}\mu_{i_2}=J'$, so that $\eta_2\eta_3=J'/(f_2(T)f_3(T))=J/\log(T/C')$, which is the first condition in \eqref{eqn.BoxVol} at height $T/C'$. For the second, $\eta_{i_2}\le\mu_{i_2}<1/4$ always, while $f_{i_1}(T)\ge(\log T)^{1/2}$ because $f_2(T)f_3(T)=\log T$, so $\eta_{i_1}=8J'/f_{i_1}(T)\le 1/4$ for all $T\ge T_0$, with $T_0$ depending only on $a$ and $b$. Nothing is required of $f_{i_2}$, which may be identically $1$.

Lemma \ref{lem.BalanceBox} produces a totally positive $u\in\Zl^\times$ with $1<u\le T/C'$, $|y_i(\bm\zeta(u))|\le 2\eta_i$ and $|\bm\zeta(u)|\le 2\max_i\eta_i\le 1/2$. Writing $\bm\zeta=\bm\zeta(u)$, we have $e^{\zeta_3}\le e^{1/2}<2$ and $|y_i|\le 2|\bm\zeta|\le 1$, so that $|e^{y_i}-1|\le 2|y_i|$, and \eqref{eqn.FlatError} gives
\begin{equation}\label{eqn.BiquadErrors}
	\bigl|\Psi_i(\bm\epsilon)\bigr|\le 8\alpha_1^*\alpha_i\,|y_i|\le 16\alpha_1^*\alpha_i\,\eta_i\qquad(i=2,3),
\end{equation}
while $|\Psi_1(\bm\epsilon)|\le|\bm\epsilon|\le 2|\bm\zeta|\le 1$ by \eqref{eqn.PsiBiquad}.

Let $q$ and $\bp$ be as in Section \ref{sec.Proof}, for this $u$. By \eqref{eqn.ExactError}, \eqref{eqn.BiquadErrors} and $u>1$,
\[
\begin{gathered}
	|p_{i_2}-q\alpha_{i_2}|\le 16\alpha_1^*\alpha_{i_2}\mu_{i_2}=2\alpha_1^*\alpha_{i_2}<\tfrac12,\\
	|p_{i_1}-q\alpha_{i_1}|\le 16\alpha_1^*\alpha_{i_1}\eta_{i_1}=\frac{128\alpha_1^*\alpha_{i_1}J'}{f_{i_1}(T)},
\end{gathered}
\]
where the first bound uses $4\alpha_1^*\alpha_2=1/\sqrt b$ and $4\alpha_1^*\alpha_3=1/\sqrt a$, and the second is less than $1$ for all $T\ge T_0$, after enlarging $T_0$ if necessary, since $f_{i_1}(T)\ge(\log T)^{1/2}$. Therefore $q\not=0$, exactly as in the proof of Theorem \ref{thm.Weighted}. Putting $n=|q|$ we have $1\le n\le C'u\le T$ and $u^{-1/3}\le C'^{1/3}n^{-1/3}$, so \eqref{eqn.ExactError} and \eqref{eqn.BiquadErrors} give \eqref{eqn.BiquadConc} with
\[
	C=C'^{1/3}\max\bigl(2,\ 16\alpha_1^*\alpha_2M_0,\ 16\alpha_1^*\alpha_3M_0\bigr),\qquad M_0=\max(16J,\tfrac18),
\]
since $\mu_2,\mu_3\le M_0$ for every $T$ and every admissible pair, because $\mu_{i_1}=8J'\le 16J$. This completes the proof of Theorem \ref{thm.Biquad}.

{\footnotesize
\noindent
Department of Mathematics\\
University of Houston\\
Houston, TX, United States\\
KD: kkavita@cougarnet.uh.edu\\
JF: jbflynn@cougarnet.uh.edu\\
AH: haynes@math.uh.edu
}

\end{document}